\documentclass[preprint,12pt]{elsarticle}

\usepackage{amssymb}
\usepackage{amsmath}
\usepackage{graphicx} 
\usepackage{epstopdf} 
\usepackage{placeins}

\newtheorem{theorem}{Theorem}[section]
\newtheorem{remark}[theorem]{Remark}
\newtheorem{lemma}[theorem]{Lemma}

\newtheorem{proposition}[theorem]{Proposition}

\journal{Applied and Computational Harmonic Analysis}

\begin{document}

\begin{frontmatter}



\title{Phase Retrieval of Bandlimited Signals Using a Morlet Wavelet and an Admissible Companion}
\tnotetext[t1]{This work was supported partially by the National Natural Science Foundation of China (12471131 and 11971348) and Tianjin Natural Science Foundation Project (24JCYBJC00480).}
\author[1]{Bei Liu}
\ead{liubei1@mail.nankai.edu.cn}
\author[1]{Meng Ning}
\ead{ningmeng1341@stud.tjut.edu.cn}
\author[1]{Liping Wu\corref{cor1}}
\ead{wuliping@email.tjut.edu.cn}
\author[1]{Qingyue Zhang}
\ead{jczhangqingyue@163.com}
\cortext[cor1]{Corresponding author}

\affiliation[1]{organization={School of Mathematical Sciences, Institute of Signal Processing and Machine Learning, Tianjin University of Technology},
	city={Tianjin},
	postcode={300384}, 
	country={China}}

\begin{abstract}
In this paper, we demonstrate that complex bandlimited signals can be uniquely recovered (up to a global phase factor) from two families of wavelet transform magnitudes, sampled at twice the Nyquist rate in two frequency bins. These families come from translations of the fixed dilated Morlet wavelet and a fixed companion wavelet. The companion wavelet admits an augmented Morlet--Gaussian representation. We rewrite the two families as two modulated Gaussian convolution measurements of an auxiliary bandlimited signal. Using a Gaussian convolution uniqueness result, we prove that samples on \((4B)^{-1}\mathbb{Z}\) determine every signal in \(PW_B^2\) (up to a global phase factor). Compared with earlier wavelet phase retrieval results, our approach requires significantly fewer sampling points.
\end{abstract}

%

\begin{keyword}
Phase retrieval, wavelet transform, Morlet wavelet, Gaussian function, bandlimited signals

\end{keyword}

\end{frontmatter}






\section{Introduction}

The wavelet transform was introduced in the work of Morlet and coauthors in 1982 \cite{ref2,ref20}.
Through translations and dilation, it provides a time-frequency description of signals. As a highly valuable mathematical tool, it has also been widely used in signal analysis and image processing \cite{ref4,ref5,ref6}. 

Phase retrieval is the problem of recovering a signal (up to a constant global phase factor) from magnitudes of linear measurements, also known as intensity measurements or magnitude-only measurements. The problem originated in crystallography \cite{sayre} and now arises in adaptive optics \cite{B3}, optical wavefront reconstruction \cite{B4}, signal processing \cite{J1}, radar \cite{J2}, and speech recognition \cite{J3}.
The measurements may be Fourier, Gabor, or wavelet-transform measurements; see, for example, \cite{WA1,WA2,WA3,WA4,gaborconnection,ref22,ref23,ref24,ref25}.

For wavelet phase retrieval, in 2015, Mallat and Waldspurger proved a uniqueness result for Cauchy wavelets \cite{ref10}. In 2023, Alaifari, Bartolucci, and Wellershoff studied phase retrieval of bandlimited functions for wavelets with finitely many vanishing moments \cite{ref1}. Furthermore, in 2024, they also proved that every real-valued signal can be uniquely recovered (up to global sign) from the magnitudes of its wavelet frame coefficients when the analyzing wavelet is real-valued \cite{ref12}.
Related sampled Gabor phase-retrieval results include \cite{ref15}.

In our paper, we focus on the phase retrieval from the wavelet transforms measurements when the recovered signals are the bandlimited square-integrable signals $f\in L^{2}(\mathbb{R}).$ The wavelet transforms of $f\in L^{2}(\mathbb{R})$ is defined by \cite{ref1}
\begin{eqnarray}\label{eq:wavelet-definition}
	\mathcal{W}_{\psi}f(b,a):=a^{-\frac{1}{2}}\int_{\mathbb{R}}f(x)\overline{\psi\left(\frac{x-b}{a}\right)}dx,b\in\mathbb{R},a\in\mathbb{R}_{+}.
\end{eqnarray}
In this context, a non-zero function $\psi\in L^{1}(\mathbb{R})$ is called a wavelet if $\hat{\psi}(0)=0$, or equivalently if
$$\int_{\mathbb{R}}\psi(x)dx=0.$$

We will recover the signal $f$ from the magnitude-only measurements $|\mathcal{W}_{\psi}f(b,a)|$, known as wavelet phase retrieval. Our main result establishes that the bandlimited square-integrable signals, with a bandwidth $B>0$, can be recovered (up to a constant global phase factor) from coefficient magnitudes associated with translations of the fixed Morlet wavelet \(D_{a}\psi\) and an explicitly constructed fixed companion wavelet \(h_{a,w}\) in (\ref{eq:companion-window}). It is well known that the Morlet wavelet is
\begin{eqnarray}\label{eq:intro-morlet}
	\psi(x)=\pi^{-\frac{1}{4}}[e^{it_{0}x}-e^{-t_{0}^{2}/2}]e^{-x^{2}/2},\quad t_{0}\in\mathbb{R}\setminus\{0\},x\in\mathbb{R}.
\end{eqnarray}
Indeed, its Fourier transform can be factored near the origin as
\begin{equation}\label{eq:morlet-fourier}
	\widehat{\psi}(\xi)=\sqrt{2}\,\pi^{1/4}e^{-t_{0}^{2}/2} e^{-2\pi^{2}\xi^{2}}\bigl(e^{2\pi t_{0}\xi}-1\bigr).
\end{equation}
Obviously, $\widehat{\psi}$ is real valued and \(\widehat{\psi}(0)=0\).

The remainder of this paper is organized as follows.   Section 2 collects the necessary notations, definitions, and auxiliary lemmas. Section 3 establishes a uniqueness result for bandlimited signals from the magnitudes of their convolutions with a Gaussian kernel. In Section 4, we state the main result to show that every signal can be uniquely recovered (up to global sign) from the magnitudes associated with the fixed Morlet wavelet and an explicitly constructed fixed companion wavelet. Finally, in Section 5, we conclude the paper.

\section{Preliminaries}

\subsection{Basic notation and definitions}
Let us give some basic notations and definitions used in this paper.

$\bullet$ The Hilbert-space inner product is
\[
\langle f,g\rangle=\int_{\mathbb{R}}f(x)\overline{g(x)}\,dx.
\]

$\bullet$ For \(y,\omega\in\mathbb{R}\) and \(s>0\), we define translation operator $T_{y}$, the modulation operator $M_{w}$ and the dilation operator $D_{s}$ by
\[
(T_{y}f)(x)=f(x-y),\qquad (M_{\omega} f)(x)=e^{2\pi i\omega x}f(x),\qquad (D_{s}f)(x)=s^{-1/2}f(x/s).
\]

$\bullet$ For $f\in L^{1}(\mathbb{R})\cap L^{2}(\mathbb{R})$, the Fourier transform of $f$ is defined by
$$\mathcal{F}f(t)=\widehat{f}(t)=\int_{\mathbb{R}}f(x)e^{-2\pi ixt}dx,\;t\in\mathbb{R}.$$
The Fourier transform is extended to \(L^{2}(\mathbb{R})\) by Plancherel's theorem.

$\bullet$ For $f\in L^{1}(\mathbb{R})\cap L^{2}(\mathbb{R})$, the Fourier inverse transform of $f$ is
$$\mathcal{F}^{-1}f(x)=\check{f}(x)=\int_{\mathbb{R}}f(t)e^{2\pi ixt}dt,\;x\in\mathbb{R}.$$
With these conventions,
\begin{equation}\label{eq:operator-fourier}
	\widehat{T_{y}f}(\xi)=e^{-2\pi iy\xi}\widehat{f}(\xi),\qquad \widehat{M_{\omega} f}(\xi)=\widehat{f}(\xi-\omega),\qquad \widehat{D_{s}f}(\xi)=s^{1/2}\widehat{f}(s\xi).
\end{equation}
The last identity follows directly from the change of variables \(x=su\):
\[
\widehat{D_{s}f}(\xi)=s^{-1/2}\int_{\mathbb{R}}f(x/s)e^{-2\pi ix\xi}\,dx=s^{1/2}\int_{\mathbb{R}}f(u)e^{-2\pi isu\xi}\,du.
\]

$\bullet$ Define the convolution $f\ast g$ by $$(f\ast g)(x)=\int_{\mathbb{R}}f(t)g(x-t)dt.$$
This integral is absolutely convergent for every \(x\), since
\[
\int_{\mathbb{R}}|f(t)g(x-t)|\,dt\leq\|f\|_{2}\|g\|_{2}.
\]
These hypotheses apply to every convolution below.

$\bullet$ The $Paley-Wiener$ space of bandlimited functions is defined by
$$PW^{p}_{B}:=\{f\in L^{p}(\mathbb{R}):\mathrm{supp}(\mathcal{F}f)\subset[-B,B]\},$$ where $p\in[1,\infty],\:B>0$.
Equation \eqref{eq:operator-fourier} gives the bandwidth rule
\begin{equation}\label{eq:bandwidth-rule}
	f\in PW_{B}^{2}\quad\Longrightarrow\quad D_{s}f\in PW_{B/s}^{2} \quad\text{and}\quad D_{1/s}f\in PW_{sB}^{2}.
\end{equation}

$\bullet$ The equivalence relation $\sim$ on the set of complex-valued functions is defined by
\begin{eqnarray*}
	f\sim g\Leftrightarrow \exists\:\alpha\in\mathbb{R}:f=e^{i\alpha}g,
\end{eqnarray*}
we say that $f$ and $g$ agree up to global phase if $f\sim g$. 

$\bullet$ The Guassian function is given by
\begin{equation*}\label{eq:gaussian}
	\phi(x)=2^{1/4}e^{-\pi x^{2}}.
\end{equation*}
Then \(\|\phi\|_{2}=1\), \(\widehat{\phi}=\phi\), and \(\phi\) is real and even.

$\bullet$ The Schwartz space is defined by
\[
\mathcal{S}(\mathbb{R})
=
\left\{
f\in C^\infty(\mathbb{R}):
\sup_{x\in\mathbb{R}}|x^m f^{(n)}(x)|<\infty
\ \text{for all } m,n\in\mathbb{N}_0
\right\},
\]
where \(\mathbb{N}_0=\{0,1,2,\dots\}\).

\subsection{Auxiliary results}

The following result is the uniqueness input used in the paper.
\begin{proposition}[Gaussian two-bin uniqueness]\label{prop:gaussian-uniqueness}
	\cite[Theorem 6]{ref15}
	Let \(B>0\), and let \(w_{0},w_{1}\in\mathbb{R}\) with \(w_{0}\neq w_{1}\).
	If \(f,g\in PW_{B}^{2}\) satisfy
	\[
	\bigl|\phi*(M_{-w_{j}}f)\bigr|(x)=\bigl|\phi*(M_{-w_{j}}g)\bigr|(x), \qquad j=0,1,\quad x\in\frac{1}{4B}\mathbb{Z},
	\]
	then \(f\sim g\).
\end{proposition}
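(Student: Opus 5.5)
The plan is to rewrite the measurements as short-time Fourier transform magnitudes, upgrade the samples on \(\frac{1}{4B}\mathbb{Z}\) to the full real line, and then run a zero-flipping argument for an entire function known in modulus on two parallel horizontal lines.

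\textbf{Step 1: reformulation on the Fourier side.} For \(f\in PW_B^2\) write
\[
F_j(x):=\bigl(\phi*(M_{-w_j}f)\bigr)(x)=\int_{\mathbb{R}}\widehat f(\xi+w_j)\,\phi(\xi)\,e^{2\pi i x\xi}\,d\xi .
\]
Thus \(F_j\) is the inverse Fourier transform of \(G_j(\xi):=\widehat f(\xi+w_j)\phi(\xi)\). This is an \(L^2\) function supported in the interval \([-B-w_j,\,B-w_j]\) of length \(2B\). Consequently \(|F_j|^2=F_j\overline{F_j}\) lies in \(L^1\). Its Fourier transform is the autocorrelation \(G_j\star\widetilde{G_j}\), which is continuous, supported in \([-2B,2B]\), and vanishes at \(\pm 2B\): the overlap of the two supports shrinks to a point there and \(G_j\in L^2\).

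\textbf{Step 2: from samples to the whole line.} The samples of \(|F_j|^2\) on \(\frac{1}{4B}\mathbb{Z}\) determine, by the Poisson summation formula, the \(4B\)-periodization of \(\widehat{|F_j|^2}\). Since the support is \([-2B,2B]\), the translates overlap only at the endpoints, where the function vanishes. Hence the periodization determines \(\widehat{|F_j|^2}\), and therefore \(|F_j(x)|\) for all \(x\in\mathbb{R}\). Some care is needed to justify Poisson summation for an \(L^1\) function with continuous compactly supported Fourier transform. I would handle this by convolving with a Fejér-type kernel, or by noting that \(|F_j|^2\) is a Schwartz-like entire function of exponential type. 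So it suffices to prove: if \(|F_j(x)|=|\widetilde F_j(x)|\) for all real \(x\) and \(j=0,1\), where \(\widetilde F_j\) is built from \(g\), then \(f\sim g\).

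\textbf{Step 3: two bins give two horizontal lines.} Put \(H(z):=\int \widehat f(\xi)e^{-\pi(\xi-w_0)^2}e^{2\pi i z\xi}\,d\xi\). It is entire of exponential type, hence of order at most \(1\). Since \(e^{-\pi(\xi-w_1)^2}=C\,e^{-\pi(\xi-w_0)^2}e^{2\pi(w_1-w_0)\xi}\) with \(C>0\), the two measurements amount, up to harmless unimodular factors and translations in \(x\), to knowing
\[
|H(x)| \quad\text{and}\quad |H(x-ic)|\ \text{ for all } x\in\mathbb{R},\qquad c:=w_1-w_0\neq 0 .
\]
Knowledge of \(|H|\) on \(\mathbb{R}\) determines the entire function \(H(z)\overline{H(\bar z)}\), hence the zero multiset of \(H\) up to flipping individual zeros \(z_k\mapsto\bar z_k\). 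Likewise, \(|H(\cdot-ic)|\) determines the zeros up to reflection across the line \(\operatorname{Im}z=-c\), namely \(z_k\mapsto \bar z_k-2ic\). The candidate pairs \(\{z_k,\bar z_k\}\) and \(\{z_k,\bar z_k-2ic\}\) intersect only in \(z_k\), because \(c\neq0\). The two reflections generate a translation by \(2ic\), so no nontrivial compatible rearrangement of a locally finite multiset survives. Hence the zero set of \(H\) is determined.

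By Hadamard factorization, \(H=e^{az+b}P\) with \(P\) the canonical product, which is now known. Then \(|e^{az+b}|\) known on two distinct horizontal lines fixes \(\operatorname{Re}b\), \(\operatorname{Re}a\) and \(\operatorname{Im}a\). Only \(\operatorname{Im}b\), a global phase, remains, so \(\widehat f\sim\widehat g\) and \(f\sim g\).

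\textbf{Main obstacle.} I expect the hardest step to be the rigorous zero-flipping argument in Step 3. One must handle multiplicities, zeros lying on the real line or on the line \(\operatorname{Im}z=-c\) where the flips are trivial, and infinitely many zeros. One must also make sure that the order of \(H\) is at most \(1\), so that the exponential factor is affine and is pinned down by two lines. I would first try to avoid combinatorics altogether: compare the two functions \(H(z)\overline{H(\bar z)}\) and \(H(z-ic)\overline{H(\bar z-ic)}\) for \(f\) and \(g\), and divide out common factors inductively in the imaginary part. As a fallback, one can use the \(2ic\)-translation structure to show that any zero that would be flipped forces an infinite arithmetic progression of zeros, which is incompatible with the exponential-type growth bound.
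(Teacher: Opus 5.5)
The paper gives no proof of this proposition; it is quoted from \cite[Theorem 6]{ref15}, so your outline has to stand on its own. Steps 1--2 and the reduction to knowing $|H|$ on $\mathbb{R}$ and on $\mathbb{R}-ic$ are correct. In Step 2 you need neither Poisson summation nor Schwartz decay, which $|F_j|^2$ generally lacks: $|F_j|^2\in L^1\cap L^\infty\subset L^2$ has spectrum in $[-2B,2B]$, so Shannon sampling with spacing $1/(4B)$ recovers it. The closing Hadamard argument is also fine once the zero sets of $H_f$ and $H_g$ are known to coincide.

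The gap is in Step 3. Your claim that ``no nontrivial compatible rearrangement of a locally finite multiset survives'' is false. So is the fallback's justification that a vertical arithmetic progression of zeros contradicts exponential type. Take $c\neq0$, take $z_0$ with $\operatorname{Im}z_0\notin c\mathbb{Z}$, and set
\[
H_1(z)=\sinh\Bigl(\frac{\pi(z-z_0)}{2c}\Bigr),\qquad H_2(z)=\sinh\Bigl(\frac{\pi(z-\overline{z_0})}{2c}\Bigr).
\]
Both are entire of exponential type. For all real $x$ we have $|H_1(x)|=|H_2(x)|$ and $|H_1(x-ic)|=|\cosh(\pi(x-z_0)/(2c))|=|H_2(x-ic)|$. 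Yet their zero sets $z_0+2ic\mathbb{Z}$ and $\overline{z_0}+2ic\mathbb{Z}$ differ, so $H_1\not\sim H_2$.

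Hence no argument can succeed that uses only the two determined products $H(z)\overline{H(\bar z)}$ and $H(z)\overline{H(\bar z-2ic)}$ together with order $\le 1$; your ``inductive division'' idea is included in this. The missing ingredient is that $H$ lies in the Paley--Wiener (Cartwright) class. Because $\widehat f\,e^{-\pi(\cdot-w_0)^2}$ is integrable and supported in $[-B,B]$, the function $e^{2\pi iBz}H(z)$ is bounded in the upper half-plane and $e^{-2\pi iBz}H(z)$ is bounded in the lower one. So for $H\not\equiv0$ the zeros satisfy the Blaschke condition $\sum_k|\operatorname{Im}z_k|/(1+|z_k|^2)<\infty$ in each half-plane.

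With this, Step 3 closes without any combinatorics. Let $n_f,n_g$ be the zero-multiplicity functions of $H_f,H_g$, and set $\mu=n_f-n_g$. The two identities give $\mu(\bar z)=-\mu(z)$ and $\mu(\bar z-2ic)=-\mu(z)$, hence $\mu(z-2ic)=\mu(z)$. This automatically handles multiplicities and zeros lying on the two lines. Suppose $\mu(z_0)=m\neq0$. Then $H_f$ (if $m>0$) or $H_g$ (if $m<0$) vanishes at every point of $z_0+2ic\mathbb{Z}$. These points contribute terms of size $\asymp 1/|k|$ to the Blaschke sum, which therefore diverges. Thus $\mu\equiv0$, and your Hadamard step finishes the proof.
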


The ordering assumption \(\omega_{0}\neq\omega_{1}\) in Proposition \ref{prop:gaussian-uniqueness} is called two distinct frequency bins in \cite{ref15}.
\begin{proposition}[Plancherel]\label{prop:plancherel}
	\cite[Theorem~1.1.2]{ref14}
	The Fourier transform extends to a unitary operator on \(L^{2}(\mathbb{R})\).
	In particular,
	\[
	\|f\|_{2}=\|\widehat{f}\|_{2}, \qquad \langle f,g\rangle=\langle\widehat{f},\widehat{g}\rangle \quad(f,g\in L^{2}(\mathbb{R})).
	\]
\end{proposition}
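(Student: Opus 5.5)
The plan is the classical one. First establish the isometry $\|\widehat f\|_2=\|f\|_2$ on a dense subspace. Then extend by continuity. Then prove surjectivity, and finally obtain the inner-product identity by polarization.

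\textbf{Step 1: isometry on $\mathcal{S}(\mathbb{R})$.} I would work first on $\mathcal{S}(\mathbb{R})$, which is dense in $L^2(\mathbb{R})$ and mapped into itself by $\mathcal{F}$. The key input is the Fourier inversion formula $\mathcal{F}^{-1}\mathcal{F}f=f$ for $f\in\mathcal{S}(\mathbb{R})$. I would prove it using the Gaussian $\phi$ from the preliminaries, whose relevant property is $\widehat{\phi}=\phi$. Insert the dilated Gaussian factor $e^{-\pi\varepsilon^2 t^2}$ into $\int \widehat f(t)e^{2\pi ixt}\,dt$ and apply Fubini. This rewrites the integral as $f$ convolved with an approximate identity $\varepsilon^{-1}e^{-\pi x^2/\varepsilon^2}$. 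Letting $\varepsilon\to0$ by dominated convergence then gives pointwise inversion.

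With inversion available, the multiplication formula $\int \widehat f\,g=\int f\,\widehat g$ (again Fubini) applied with $g=\overline{\widehat h}$ yields
\[
\langle \widehat f,\widehat h\rangle=\langle f,h\rangle \qquad (f,h\in\mathcal{S}(\mathbb{R})).
\]
This uses the identity $\widehat{\overline{\widehat h}}=\overline h$, which is where inversion enters.

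\textbf{Step 2: extension to $L^2(\mathbb{R})$.} The map $\mathcal{F}|_{\mathcal{S}}$ is a linear isometry into the complete space $L^2(\mathbb{R})$. It therefore extends uniquely to an isometry $\mathcal{F}:L^2(\mathbb{R})\to L^2(\mathbb{R})$. To make this consistent with the paper's definition on $L^1\cap L^2$, I would take $f\in L^1\cap L^2$ and approximate it by Schwartz functions $f_n$ simultaneously in $L^1$ and $L^2$ norm, for instance by truncating and mollifying. Then $\widehat{f_n}\to\widehat f$ uniformly, because the $L^1$ norm controls the sup norm of the transform, and $\widehat{f_n}$ also converges in $L^2$ to the extended transform. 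The two limits must therefore agree almost everywhere.

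\textbf{Step 3: surjectivity and the inner-product identity.} The range of the extension is closed, being the image of a complete space under an isometry. It contains $\mathcal{S}(\mathbb{R})$, since $\mathcal{F}$ maps $\mathcal{S}$ onto $\mathcal{S}$ by inversion. Since $\mathcal{S}$ is dense, the range is all of $L^2(\mathbb{R})$. Hence $\mathcal{F}$ is a surjective isometry, i.e.\ unitary. The identity $\langle f,g\rangle=\langle\widehat f,\widehat g\rangle$ for general $f,g\in L^2$ follows either by continuity from Step 1 or by the polarization identity.

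The main obstacle is the inversion formula in Step 1. Everything else is soft functional analysis, but inversion requires the explicit Gaussian computation $\widehat\phi=\phi$ and a careful approximate-identity limit. I would handle this by completing the square in the Gaussian integral and then applying a contour shift (or, alternatively, the ODE argument).
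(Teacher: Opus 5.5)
Your proof is correct; the paper itself gives no argument to compare it with. Proposition~\ref{prop:plancherel} is quoted from \cite[Theorem~1.1.2]{ref14} and used only as a black box. It supplies $\langle f,\psi_{b,a}\rangle=\langle\widehat f,\widehat{\psi_{b,a}}\rangle$ in Lemma~\ref{lem:morlet-representation}, and injectivity of $\mathcal{F}$ on $L^{2}(\mathbb{R})$ in Lemma~\ref{lem:augmented-identity} and Theorem~\ref{thm:main}. What you wrote is a self-contained version of the standard textbook proof:
\begin{itemize}
\item Gaussian-regularized inversion on $\mathcal{S}(\mathbb{R})$;
\item the multiplication formula;
\item extension by density;
\item closed range plus density for surjectivity;
\item polarization.
\end{itemize}

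\textbf{Your Step~2.} The consistency check there is genuinely needed. You establish the isometry on $\mathcal{S}(\mathbb{R})$, whereas the paper defines $\widehat f$ by the integral on $L^{1}\cap L^{2}$, so the two must be shown to agree.

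\textbf{An alternative that removes that step.} Work directly on $L^{1}\cap L^{2}$. For $f\in L^{1}\cap L^{2}$, set $h=f*\tilde f$ with $\tilde f(x)=\overline{f(-x)}$. Then $h$ is continuous and integrable, with $\widehat h=|\widehat f|^{2}\ge 0$ and $h(0)=\|f\|_{2}^{2}$. Evaluate your Gaussian approximate-identity argument at $x=0$ and use monotone convergence. This gives $\|\widehat f\|_{2}=\|f\|_{2}$ directly on $L^{1}\cap L^{2}$, and you then extend from that dense subspace. In this route, inversion on $\mathcal{S}(\mathbb{R})$ is needed only for surjectivity.

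\textbf{A minor point in Step~3.} The claim that $\mathcal{F}$ maps $\mathcal{S}$ onto $\mathcal{S}$ by inversion requires $\mathcal{F}\mathcal{F}^{-1}=\mathrm{id}$ on $\mathcal{S}(\mathbb{R})$, not just $\mathcal{F}^{-1}\mathcal{F}=\mathrm{id}$. This follows because $\mathcal{F}^{-1}g=\mathcal{F}\bigl(g(-\,\cdot)\bigr)$ and $\mathcal{F}$ commutes with reflection.
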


The following lemma states the commutativity properties of the modulation operators with respect to convolution.
\begin{lemma}\label{lem:modulation-convolution}
	For \(f,g\in L^{2}(\mathbb{R})\) and \(\omega\in\mathbb{R}\),
	\[
	(f*M_{\omega} g)(x)=M_{\omega}\bigl((M_{-\omega}f)*g\bigr)(x).
	\]
\end{lemma}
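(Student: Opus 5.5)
The identity is a direct computation from the definitions of convolution and modulation. I will first check that both sides make sense pointwise. Since $|M_{\omega}g|=|g|$ and $|M_{-\omega}f|=|f|$, all the functions involved lie in $L^{2}(\mathbb{R})$. The Cauchy--Schwarz bound recorded in the preliminaries then shows that both convolutions converge absolutely for every $x$. So the statement is a pointwise identity for every $x\in\mathbb{R}$, not merely an almost-everywhere one.

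Next I will expand the left-hand side. By the definition of convolution,
\[
(f*M_{\omega}g)(x)=\int_{\mathbb{R}}f(t)\,e^{2\pi i\omega(x-t)}g(x-t)\,dt .
\]
I then split the phase as $e^{2\pi i\omega(x-t)}=e^{2\pi i\omega x}e^{-2\pi i\omega t}$. The factor $e^{2\pi i\omega x}$ does not depend on $t$, so it comes out of the integral, which gives
\[
(f*M_{\omega}g)(x)=e^{2\pi i\omega x}\int_{\mathbb{R}}\bigl(e^{-2\pi i\omega t}f(t)\bigr)g(x-t)\,dt .
\]

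Finally I will recognize the remaining integral. The integrand contains $e^{-2\pi i\omega t}f(t)=(M_{-\omega}f)(t)$, so the integral is exactly $\bigl((M_{-\omega}f)*g\bigr)(x)$. Multiplying by $e^{2\pi i\omega x}$ is precisely the action of $M_{\omega}$ evaluated at $x$. This yields $M_{\omega}\bigl((M_{-\omega}f)*g\bigr)(x)$, as claimed.

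There is no genuine obstacle here. The only point needing care is that pulling out the factor independent of $t$ is legitimate, which follows from the absolute convergence established in the first step.
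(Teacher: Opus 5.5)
Your proposal is correct and follows essentially the same route as the paper. You expand the convolution, factor $e^{2\pi i\omega(x-t)}=e^{2\pi i\omega x}e^{-2\pi i\omega t}$, pull the $t$-independent factor out of the integral, and recognize $(M_{-\omega}f)*g$, with the Cauchy--Schwarz absolute-convergence bound from the preliminaries justifying the pointwise computation for every $x$.
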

\begin{proof}
	For every \(x\in\mathbb{R}\), absolute convergence permits the calculation
	\begin{align*}
		(f*M_{\omega} g)(x) &=\int_{\mathbb{R}}f(t)e^{2\pi i\omega(x-t)}g(x-t)\,dt\\
		&=e^{2\pi i\omega x}\int_{\mathbb{R}}(M_{-\omega}f)(t)g(x-t)\,dt.
	\end{align*}
	This is the asserted identity.
\end{proof}

Next, we present the commutativity properties of the dilation operators with
respect to convolution.
\begin{lemma}\label{lem:dilation-convolution}
	For \(f,g\in L^{2}(\mathbb{R})\) and \(s>0\),
	\[
	(f*D_{s}g)(x)=\bigl((D_{1/s}f)*g\bigr)(x/s).
	\]
\end{lemma}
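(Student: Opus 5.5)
The identity follows from one change of variables in the convolution integral, mirroring the proof of Lemma \ref{lem:modulation-convolution}. The plan is to expand the left-hand side, rescale the integration variable, and recognize the result as a convolution evaluated at $x/s$.

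First, I would write
\[
(f*D_{s}g)(x)=\int_{\mathbb{R}}f(t)\,s^{-1/2}g\!\left(\frac{x-t}{s}\right)dt .
\]
This integral converges absolutely for every $x$, by the Cauchy--Schwarz remark in the preliminaries, since $D_{s}g\in L^{2}(\mathbb{R})$. Absolute convergence is what justifies the substitution below.

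Next, I would substitute $t=su$, so that $dt=s\,du$. This gives
\[
(f*D_{s}g)(x)=s^{1/2}\int_{\mathbb{R}}f(su)\,g\!\left(\frac{x}{s}-u\right)du .
\]
Since $(D_{1/s}f)(u)=s^{1/2}f(su)$ by the definition of $D_{1/s}$, the right-hand side equals
\[
\int_{\mathbb{R}}(D_{1/s}f)(u)\,g\!\left(\frac{x}{s}-u\right)du=\bigl((D_{1/s}f)*g\bigr)(x/s).
\]
The last integral again converges absolutely, because $D_{1/s}f\in L^{2}(\mathbb{R})$. This is the asserted identity.

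There is no real obstacle. The only point needing care is the power of $s$: the factor $s$ from $dt=s\,du$ combines with the normalization $s^{-1/2}$ of $D_{s}$ to give $s^{1/2}$, which is exactly the normalization of $D_{1/s}$.
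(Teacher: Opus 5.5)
Your proposal is correct and matches the paper's proof step for step: you expand $f*D_{s}g$, substitute $t=su$, and absorb the resulting factor $s^{1/2}$ into $(D_{1/s}f)(u)=s^{1/2}f(su)$. Your explicit appeal to absolute convergence, which makes the change of variables legitimate, is a small justification the paper leaves implicit.
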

\begin{proof}
	Substituting \(t=su\) gives
	\begin{align*}
		(f*D_{s}g)(x) &=s^{-1/2}\int_{\mathbb{R}}f(t)g\!\left(\frac{x-t}{s}\right)dt\\
		&=s^{1/2}\int_{\mathbb{R}}f(su)g(x/s-u)\,du\\
		&=\int_{\mathbb{R}}(D_{1/s}f)(u)g(x/s-u)\,du.
	\end{align*}
\end{proof}
\begin{lemma}\label{jiaohuan}
	For $f\in L^{p}(\mathbb{R}), 1\leq q\leq \infty$, $M_{w}D_{s}f(t)=D_{s}M_{sw}f(t).$
\end{lemma}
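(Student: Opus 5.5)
The identity in Lemma \ref{jiaohuan} is a pointwise statement about two compositions of elementary operators. The plan is to evaluate both sides at an arbitrary point \(t\in\mathbb{R}\) directly from the definitions of \(M_{w}\) and \(D_{s}\) given in the notation subsection, and compare them. No integrability beyond \(f\) being a measurable function is needed, since no integrals appear. The hypothesis \(f\in L^{p}(\mathbb{R})\) only guarantees that both sides again lie in \(L^{p}\), because \(M_{w}\) preserves \(|f|\) and \(D_{s}\) rescales the \(L^p\) norm by a constant. The index \(q\) in the statement is presumably meant to be \(p\).

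\textbf{Left-hand side.} First I compute \((D_{s}f)(t)=s^{-1/2}f(t/s)\), then apply the modulation:
\[
(M_{w}D_{s}f)(t)=e^{2\pi i w t}\,s^{-1/2}f(t/s).
\]

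\textbf{Right-hand side.} I compute \((M_{sw}f)(u)=e^{2\pi i s w u}f(u)\) and then dilate by evaluating at \(u=t/s\) and multiplying by \(s^{-1/2}\):
\[
(D_{s}M_{sw}f)(t)=s^{-1/2}e^{2\pi i s w (t/s)}f(t/s)=s^{-1/2}e^{2\pi i w t}f(t/s).
\]

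The two expressions coincide for every \(t\), which proves the identity (as functions, hence also as elements of \(L^{p}\)).

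The only step needing care is the order of operations in the dilation. \(D_{s}\) acts on the whole function \(M_{sw}f\), so the phase factor \(e^{2\pi i s w u}\) must also be evaluated at \(u=t/s\). This is exactly what turns the frequency \(sw\) into \(w\). Apart from this bookkeeping there is no real obstacle. As a cross-check, I would verify the identity on the Fourier side using \eqref{eq:operator-fourier}. There \(\widehat{M_{w}D_{s}f}(\xi)=s^{1/2}\widehat f(s(\xi-w))\) and \(\widehat{D_{s}M_{sw}f}(\xi)=s^{1/2}\widehat f(s\xi-sw)\), which agree.
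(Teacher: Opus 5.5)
Your proposal is correct and follows the same route as the paper: both evaluate \(M_{w}D_{s}f(t)\) and \(D_{s}M_{sw}f(t)\) pointwise from the definitions and observe that \(e^{2\pi i sw(t/s)}=e^{2\pi i wt}\). The paper does not include your Fourier-side cross-check. Your reading of the stray index \(q\) as \(p\) matches the intended statement.
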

\begin{proof}
	According the definitions of the dilation operators and modulation operators, we can easily obtain  
	\begin{eqnarray*}
		M_{w}D_{s}f(t)
		&=&e^{2\pi iw t}D_{s}f(t)
		=s^{-\frac{1}{2}}e^{2\pi iw t}f\left(\frac{t}{s}\right)\\
		&=&s^{-\frac{1}{2}}e^{2\pi isw \frac{t}{s}}f\left(\frac{t}{s}\right)
		=s^{-\frac{1}{2}}M_{sw}f\left(\frac{t}{s}\right)\\
		&=&D_{s}M_{sw}f(t).
	\end{eqnarray*}
\end{proof}
\begin{proposition}\label{16}\cite[Theorem 16]{ref1}
	Let $B>0$ and $\psi\in L^{1}(\mathbb{R})$ be such that
	$$\lim_{\xi\rightarrow0}\xi^{-l}\mathcal{F}(\psi)(\xi)=c\in\mathbb{C}\setminus\{0\},$$
	for some $l\in \mathbb{N}$. Furthermore, let $(a_{k})_{k\in\mathbb{N}}$ be a sequence in $\mathbb{R}_{+}$ such that $a_{k}\rightarrow0$ as $k\rightarrow\infty$. Then, the
	following are equivalent for $f,g\in PW_{B}^{2}$ real-valued on the real line:\\
	i)$\left|W_{\psi}f\left(\frac{m}{4B},a_{k}\right)\right|=\left|W_{\psi}g\left(\frac{m}{4B},a_{k}\right)\right|,\quad m\in\mathbb{Z},k\in\mathbb{N};$\\
	ii)$f=\pm g.$
\end{proposition}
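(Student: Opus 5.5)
The implication ii)$\Rightarrow$i) is immediate, since $\mathcal{W}_{\psi}$ is linear and $|{-1}|=1$. For i)$\Rightarrow$ii) I would reduce the problem in three steps. First, the sampled magnitudes determine the magnitudes at every translation $b\in\mathbb{R}$. Second, a limit $a_k\to0$ turns this into equality of the moduli of $f^{(l)}$ and $g^{(l)}$. Third, analyticity and real-valuedness give $f=\pm g$.

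\textbf{Step 1: from samples to all translations.} Fix $a=a_k$. By definition, $\mathcal{W}_{\psi}f(\cdot,a)=f*\widetilde{\psi_a}$ with $\widetilde{\psi_a}(x)=a^{-1/2}\overline{\psi(-x/a)}$. Its Fourier transform is $a^{1/2}\widehat{f}(\xi)\overline{\widehat{\psi}(a\xi)}$, which is supported in $[-B,B]$ and square integrable because $\widehat\psi$ is bounded ($\psi\in L^1$). Hence $F_a:=\mathcal{W}_{\psi}f(\cdot,a)\in PW^2_B$, and $|F_a|^2=F_a\overline{F_a}$ lies in $PW^1_{2B}$, whose spectrum is the autocorrelation of the spectrum of $F_a$. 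The grid $\frac{1}{4B}\mathbb{Z}$ is exactly the Nyquist grid for bandwidth $2B$. I would use the known fact that a function in $PW^1_{2B}$ is determined by its samples on $\frac1{4B}\mathbb{Z}$. The reason is that its Fourier transform is continuous and vanishes at $\pm2B$, so the difference of two such functions vanishing on the grid lies in $PW^1_{2B}$ and vanishes on $\frac1{4B}\mathbb{Z}$, and therefore must be zero. If one prefers to avoid this, one can argue directly with the entire function of exponential type $4\pi B$ that is integrable on $\mathbb{R}$ and vanishes at $\frac1{4B}\mathbb{Z}$: divide by $\sin(4\pi Bz)$ and apply a Phragmén--Lindelöf argument. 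Either way, i) yields $|\mathcal{W}_{\psi}f(b,a_k)|=|\mathcal{W}_{\psi}g(b,a_k)|$ for all $b\in\mathbb{R}$ and all $k$. I expect this sampling step to be the main technical obstacle, because sampling at the critical rate for $PW^1$ needs the endpoint argument rather than plain Shannon oversampling.

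\textbf{Step 2: the small-scale limit.} Write
\[
a^{-l-1/2}F_a(b)=\int_{-B}^{B}\widehat f(\xi)\,\overline{a^{-l}\widehat\psi(a\xi)}\,e^{2\pi i b\xi}\,d\xi .
\]
By hypothesis, $a^{-l}\widehat\psi(a\xi)=\xi^l\,(a\xi)^{-l}\widehat\psi(a\xi)\to c\,\xi^l$ uniformly for $\xi\in[-B,B]$ as $a\to0$. Here $\widehat\psi$ is continuous, and the quotient $\eta^{-l}\widehat\psi(\eta)$ is bounded near $0$ by the limit assumption. Since $\widehat f\in L^2[-B,B]\subset L^1[-B,B]$, the integrals converge uniformly in $b$ to
\[
\bar c\int \xi^l\widehat f(\xi)e^{2\pi i b\xi}\,d\xi=\bar c\,(2\pi i)^{-l}f^{(l)}(b).
\]
Multiplying the equality from Step 1 by $a_k^{-l-1/2}$ and letting $k\to\infty$ gives $|f^{(l)}(b)|=|g^{(l)}(b)|$ for all real $b$, using $c\neq0$.

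\textbf{Step 3: conclusion.} By Paley--Wiener, $f$ and $g$ extend to entire functions, and so do $f^{(l)}$ and $g^{(l)}$. Both are real on $\mathbb{R}$, so $(f^{(l)})^2=(g^{(l)})^2$ on $\mathbb{R}$, and hence on $\mathbb{C}$ by the identity theorem. Thus the product $(f^{(l)}-g^{(l)})(f^{(l)}+g^{(l)})$ vanishes identically. Since the ring of entire functions has no zero divisors, $f^{(l)}=\pm g^{(l)}$. Then $f\mp g$ is a polynomial of degree less than $l$ that lies in $L^2(\mathbb{R})$, so it is zero, and $f=\pm g$.
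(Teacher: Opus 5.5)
The paper gives no proof of Proposition~\ref{16}. It quotes the result from \cite[Theorem 16]{ref1} and uses it only for the comparison in the final remark, so there is no in-paper argument to set yours against. Judged on its own, your proof is correct and complete.

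Step 2 is a sound uniform-convergence argument. The hypothesis gives $\sup_{|\xi|\le B}|a^{-l}\widehat{\psi}(a\xi)-c\,\xi^{l}|\le B^{l}\sup_{0<|\eta|\le aB}|\eta^{-l}\widehat{\psi}(\eta)-c|\to 0$, and $\widehat{f}\in L^{1}[-B,B]$. Step 3 uses real-valuedness in exactly the right way: $|f^{(l)}|^{2}=(f^{(l)})^{2}$ on $\mathbb{R}$ turns the modulus identity into an identity between entire functions.

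The only soft spot is the justification in Step 1. As written, ``continuous, vanishing at $\pm 2B$, therefore zero'' skips the actual reason. The difficulty you anticipate there is also not real.

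For $h\in PW^{1}_{2B}$, the samples $h\bigl(\tfrac{n}{4B}\bigr)=\int_{-2B}^{2B}\widehat{h}(\xi)e^{2\pi i n\xi/(4B)}\,d\xi$ are, up to normalization, the Fourier coefficients of $\widehat{h}\in L^{1}[-2B,2B]$ regarded as a $4B$-periodic function. An integrable periodic function whose Fourier coefficients all vanish is zero almost everywhere, so $h=0$. Equivalently, $PW^{1}_{2B}\subset PW^{2}_{2B}$, and critical-rate Shannon sampling is already injective on $PW^{2}_{2B}$.

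So no endpoint or Phragm\'en--Lindel\"of argument is needed. The would-be obstruction $\sin(4\pi Bx)$ is excluded simply because its spectrum consists of point masses rather than an integrable function.
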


\begin{proposition}\label{18}\cite[Theorem 18]{ref1}
	Let $a>1,b>0$ and $\psi\in L^{1}(\mathbb{R})$ be a wavelet such that
	$$\lim_{\xi\rightarrow0}\xi^{-l}\mathcal{F}(\psi)(\xi)=c\in\mathbb{C}\setminus\{0\},$$
	for some $l\in \mathbb{N}$. Then, any real-valued bandlimited function $f\in L^{2}(\mathbb{R})$ is uniquely determined up to global sign by the phaseless wavelet coefficients
	$$\{|\langle f,D_{a^{-n}}T_{bm}\psi\rangle|:m\in\mathbb{Z},n\in\mathbb{N}\},$$
	where $\langle f,D_{a^{-n}}T_{bm}\psi\rangle$ is called a wavelet system with generator $\psi$ and parameters $a,b$.
\end{proposition}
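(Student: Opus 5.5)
The plan is to reduce the statement to Proposition \ref{16}. The main point is that, at each fixed scale, the sampled magnitudes determine the full function \(b\mapsto|\mathcal{W}_{\psi}f(b,a)|\) once the sampling step is small enough. The steps are: rewrite the frame coefficients as samples of the wavelet transform, recover the full magnitude at each fine scale, and then invoke Proposition \ref{16}.

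\textbf{Rewriting the coefficients.} Since \((D_{s}T_{y}\psi)(x)=s^{-1/2}\psi\bigl((x-sy)/s\bigr)\), definition (\ref{eq:wavelet-definition}) gives
\[
\langle f,D_{a^{-n}}T_{bm}\psi\rangle=\mathcal{W}_{\psi}f\bigl(a^{-n}bm,\,a^{-n}\bigr),\qquad m\in\mathbb{Z},\ n\in\mathbb{N}.
\]
So the data are the magnitudes \(|\mathcal{W}_{\psi}f(\cdot,a^{-n})|\) sampled on the lattice \(a^{-n}b\mathbb{Z}\). The lattice depends on \(n\), whereas Proposition \ref{16} uses the fixed lattice \(\frac{1}{4B}\mathbb{Z}\).

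\textbf{Bandwidth of one scale.} Fix a scale \(s>0\) and set \(F_{s}(x)=\mathcal{W}_{\psi}f(x,s)\). Then \(F_{s}\) is the convolution of \(f\) with \(s^{-1/2}\overline{\psi(-\cdot/s)}\), so
\[
\widehat{F_{s}}(\xi)=s^{1/2}\,\widehat{f}(\xi)\,\overline{\widehat{\psi}(s\xi)}.
\]
Because \(\psi\in L^{1}(\mathbb{R})\), the factor \(\widehat{\psi}\) is bounded. Hence \(F_{s}\in PW_{B}^{2}\), and the same holds for the analogous function \(G_{s}\) built from \(g\). Consequently
\[
|F_{s}|^{2}=F_{s}\overline{F_{s}}\in PW_{2B}^{1},
\]
since its Fourier transform \(\widehat{F_{s}}*\widetilde{\overline{\widehat{F_{s}}}}\) is supported in \([-2B,2B]\) and \(|F_{s}|^{2}\in L^{1}\).

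\textbf{Key sampling step (main obstacle).} Let \(H=|F_{s}|^{2}-|G_{s}|^{2}\in PW_{2B}^{1}\), and suppose \(H\) vanishes on \(h\mathbb{Z}\) with \(h<\frac{1}{4B}\). I want to conclude \(H\equiv0\). By Poisson summation, \(\sum_{k}\widehat{H}(\xi-k/h)=h\sum_{m}H(hm)e^{-2\pi ihm\xi}=0\). Here \(\widehat{H}\) is continuous and supported in \([-2B,2B]\), an interval of length \(4B<1/h\). So the translates do not overlap, and \(\widehat{H}=0\). To make this rigorous I would justify Poisson summation in the distributional sense, or periodize \(\widehat{H}\) and compare Fourier coefficients; this is where care is needed. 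Strict inequality \(h<\frac{1}{4B}\) avoids boundary issues at the Nyquist rate.

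\textbf{Conclusion.} Because \(a>1\), there is \(n_{0}\) with \(a^{-n}b<\frac{1}{4B}\) for all \(n\ge n_{0}\). If the phaseless coefficients of \(f\) and \(g\) coincide, the previous step gives \(|\mathcal{W}_{\psi}f(\cdot,a^{-n})|=|\mathcal{W}_{\psi}g(\cdot,a^{-n})|\) on all of \(\mathbb{R}\) for each \(n\ge n_{0}\). In particular equality holds at the points \(\frac{m}{4B}\), \(m\in\mathbb{Z}\). Now take \(a_{k}=a^{-(n_{0}+k)}\to0\). The vanishing-moment hypothesis on \(\widehat{\psi}\) is the same as in Proposition \ref{16}, and \(f,g\) are real-valued elements of \(PW_{B}^{2}\). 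So condition i) of Proposition \ref{16} holds, and therefore \(f=\pm g\).
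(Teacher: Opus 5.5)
Your proof is correct. The paper gives no proof of this proposition; it imports it verbatim from Theorem 18 of Alaifari--Bartolucci--Wellershoff, so there is no in-paper argument to compare against.

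Your route is the natural reduction of the frame version to the sampled version, and it is sound:
\begin{itemize}
\item identify $\langle f,D_{a^{-n}}T_{bm}\psi\rangle=\mathcal{W}_{\psi}f(a^{-n}bm,a^{-n})$;
\item use that $|\mathcal{W}_{\psi}f(\cdot,a^{-n})|^{2}\in PW^{1}_{2B}$ is determined by its samples on $a^{-n}b\mathbb{Z}$ once $a^{-n}b<\frac{1}{4B}$;
\item feed the scales $a^{-(n_0+k)}\to0$ into Proposition~\ref{16}.
\end{itemize}

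Two details deserve a line each.

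\textbf{Common bandwidth.} The statement does not fix a bandwidth in advance, so take $B$ to be the larger of the bandwidths of $f$ and $g$. Then $n_0$ depends on the pair, which is harmless for a uniqueness statement.

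\textbf{Continuity of $F_s$.} The function $F_s$ is continuous, because $f\in PW_B^2\subset L^\infty(\mathbb{R})$ and the kernel lies in $L^1(\mathbb{R})$; this also makes the wavelet integral absolutely convergent at every point. Hence $H$ is continuous with $\widehat{H}\in L^1(\mathbb{R})$, and Fourier inversion $H(x)=\int\widehat{H}(\xi)e^{2\pi i x\xi}\,d\xi$ holds at every sample point. This is exactly what makes your periodization argument rigorous.

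The strict inequality $h<\frac{1}{4B}$ is not actually needed: the continuous function $\widehat{H}$ vanishes at $\pm2B$, so sampling exactly at $\frac{1}{4B}\mathbb{Z}$ already works for $PW^{1}_{2B}$. It costs nothing here, though, since $a^{-n}b\to0$.
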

\section{Gaussian Convolution Measurements}

For \(s>0\), applying translation and dilation to the Gaussian function, we get 
\begin{equation}\label{eq:sigma}
	\sigma_{s} f(b):=\mathcal{W}_{\phi} f(b,s)=(f*D_{s}\phi)(b),
\end{equation}
which is called the Gaussian analyzing transform. The last equality uses that \(\phi\) is real and even.

The next theorem is therefore stated at one arbitrary scale $a$. The proof of the theorem records exactly how this scale $a$ affects the signal bandwidth and the sampling lattice. This bookkeeping is essential because the dilation \(D_{1/a}\) does not preserve the centered bandlimit \(B\).

\begin{theorem}\label{thm:scaled-gaussian}
	Let \(a,B>0\), let \(\xi_{0},\xi_{1}\in\mathbb{R}\) with \(\xi_{0}\neq\xi_{1}\), and let \(f,g\in PW_{B}^{2}\).
	If, for \(i=0,1\),
	\begin{equation}\label{eq:scaled-data}
		\bigl|f*D_{a}(M_{\xi_{i}}\phi)\bigr|(b)=\bigl|g*D_{a}(M_{\xi_{i}}\phi)\bigr|(b), \qquad b\in\frac{1}{4B}\mathbb{Z},
	\end{equation}
	then \(f\sim g\).
\end{theorem}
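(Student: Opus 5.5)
The plan is to reduce the data \eqref{eq:scaled-data} to the unscaled two-bin setting of Proposition \ref{prop:gaussian-uniqueness}, applied to the auxiliary signals \(F:=D_{1/a}f\) and \(G:=D_{1/a}g\). By the bandwidth rule \eqref{eq:bandwidth-rule}, \(F,G\in PW_{aB}^{2}\). The aim is to show that the sampling lattice \(\frac{1}{4B}\mathbb{Z}\) in \(b\) becomes exactly the lattice \(\frac{1}{4aB}\mathbb{Z}\) that Proposition \ref{prop:gaussian-uniqueness} requires for bandwidth \(aB\).

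First, move the modulation outside the dilation. Lemma \ref{jiaohuan} gives \(M_{w}D_{s}=D_{s}M_{sw}\). Taking \(s=a\) and \(w=\xi_i/a\) yields \(D_{a}M_{\xi_i}\phi=M_{\xi_i/a}D_a\phi\). Lemma \ref{lem:modulation-convolution} then gives
\[
\bigl(f*D_{a}(M_{\xi_{i}}\phi)\bigr)(b)=e^{2\pi i\xi_i b/a}\bigl((M_{-\xi_i/a}f)*D_a\phi\bigr)(b).
\]
The unimodular prefactor disappears when we take absolute values.

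Next, apply Lemma \ref{lem:dilation-convolution} with \(s=a\):
\[
\bigl((M_{-\xi_i/a}f)*D_a\phi\bigr)(b)=\bigl((D_{1/a}M_{-\xi_i/a}f)*\phi\bigr)(b/a).
\]
Using Lemma \ref{jiaohuan} again, now with \(s=1/a\) and \(w=-\xi_i\), we get \(D_{1/a}M_{-\xi_i/a}=M_{-\xi_i}D_{1/a}\). Since convolution is commutative, this gives
\[
\bigl|f*D_{a}(M_{\xi_{i}}\phi)\bigr|(b)=\bigl|\phi*(M_{-\xi_i}F)\bigr|(b/a),
\]
and the same identity holds with \(g\) and \(G\) in place of \(f\) and \(F\). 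As \(b\) runs through \(\frac{1}{4B}\mathbb{Z}\), the point \(x=b/a\) runs through \(\frac{1}{4aB}\mathbb{Z}\).

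The hypothesis therefore says \(|\phi*(M_{-\xi_j}F)|=|\phi*(M_{-\xi_j}G)|\) on \(\frac{1}{4(aB)}\mathbb{Z}\) for \(j=0,1\). Since \(\xi_0\neq\xi_1\), Proposition \ref{prop:gaussian-uniqueness}, applied with bandwidth \(aB\) and bins \(w_j=\xi_j\), gives \(F=e^{i\alpha}G\). Applying the unitary operator \(D_a=D_{1/a}^{-1}\) gives \(f=e^{i\alpha}g\), that is, \(f\sim g\).

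The main obstacle is only the bookkeeping of scales. One must check that the two applications of Lemma \ref{jiaohuan} produce the frequency \(\xi_i\) itself, not \(\xi_i/a\) or \(a\xi_i\), so that the bins stay distinct. One must also check that the rescaling \(b\mapsto b/a\) matches the bandwidth change \(B\mapsto aB\), so that the sampling density is exactly the one Proposition \ref{prop:gaussian-uniqueness} requires. The justification of each convolution identity is supplied by the absolute convergence noted in the preliminaries, since all functions involved are in \(L^2\).
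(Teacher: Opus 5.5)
Your proof is correct and follows the paper's argument. You set $F=D_{1/a}f$ and $G=D_{1/a}g$ in $PW_{aB}^2$, rewrite the data as $|\phi*(M_{-\xi_i}F)|(b/a)$ with $b/a\in\frac{1}{4aB}\mathbb{Z}$, apply Proposition \ref{prop:gaussian-uniqueness}, and undo the dilation. The only difference is cosmetic: the paper applies Lemma \ref{lem:dilation-convolution} first with kernel $M_{\xi_i}\phi$ and then Lemma \ref{lem:modulation-convolution}, which avoids your two uses of Lemma \ref{jiaohuan}.
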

\begin{proof}
	Set
	\[
	F=D_{1/a}f,\qquad G=D_{1/a}g.
	\]
	We first identify their Fourier supports.
	By formula \eqref{eq:operator-fourier}, we can get
	\begin{equation*}\label{eq:section-three-bandwidth}
		\widehat{F}(\eta)=a^{-1/2}\widehat{f}(\eta/a), \qquad \widehat{G}(\eta)=a^{-1/2}\widehat{g}(\eta/a).
	\end{equation*}
	If \(|\eta|>aB\), then \(|\eta/a|>B\). Since \(f,g\in PW_{B}^{2}\), we have
	\[
	F,G\in PW_{aB}^{2}.
	\]
For each \(i\in\{0,1\}\), by Lemma \ref{lem:modulation-convolution} and Lemma \ref{lem:dilation-convolution}, it follows that
	\begin{align}
		\bigl(f*D_{a}(M_{\xi_{i}}\phi)\bigr)(b) &=\bigl(F*M_{\xi_{i}}\phi\bigr)(b/a)\nonumber\\
		&=e^{2\pi i\xi_{i} b/a} \bigl((M_{-\xi_{i}}F)*\phi\bigr)(b/a).\label{eq:section-three-reduction}
	\end{align}
	The same identity holds with \(g\) and \(G\).
	
	Putting \(x=b/a\), since  $b\in\frac{1}{4B}\mathbb{Z},$ we can get 
	\begin{equation*}\label{eq:section-three-lattice}
		x\in\frac{1}{4aB}\mathbb{Z}.
	\end{equation*}
	Consequently, \eqref{eq:scaled-data} and \eqref{eq:section-three-reduction} imply
	\[
	\bigl|\phi*(M_{-\xi_{i}}F)\bigr|(x)=\bigl|\phi*(M_{-\xi_{i}}G)\bigr|(x), \qquad x\in\frac{1}{4aB}\mathbb{Z},\quad i=0,1.
	\]
	Here convolution is commutative.
	For $F,G\in PW_{aB}^{2}$, applying Proposition \ref{prop:gaussian-uniqueness} with \(w_0 = \xi_0\) and \(w_1 = \xi_1\) yields \(F\sim G\).
	Finally, applying the inverse dilation \(D_{a}\) gives \(f\sim g\).
\end{proof}


\section{Augmented Morlet--Gaussian Measurements}


We first present another representation of the Morlet wavelet transform.
\begin{lemma}\label{lem:morlet-representation}
	Let \(a>0\) and let \(f\in L^{2}(\mathbb{R})\) satisfy
	\begin{equation}\label{eq:H-domain}
		e^{2\pi at_{0}\xi}\widehat{f}\in L^{2}(\mathbb{R}).
	\end{equation}
	Define \(H_{f}^{a}\in L^{2}(\mathbb{R})\) by
	\begin{equation}\label{eq:H-definition}
		\widehat{H_{f}^{a}}(\xi)=e^{2\pi at_{0}\xi}\widehat{f}(\xi) \quad\text{for almost every }\xi\in\mathbb{R},
	\end{equation}
	and set \(s_{a}=\sqrt{2\pi}\,a\).
	Then
	\begin{equation}\label{eq:morlet-representation}
		\mathcal{W}_{\psi} f(b,a)=e^{-t_{0}^{2}/2} \left[\bigl(D_{1/s_{a}}(H_{f}^{a}-f)\bigr)*\phi\right](b/s_{a}).
	\end{equation}
	In particular, the hypotheses hold for every \(f\in PW_{B}^{2}\), in that case \(H_{f}^{a}-f\in PW_{B}^{2}\).
\end{lemma}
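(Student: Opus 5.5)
The plan is to prove \eqref{eq:morlet-representation} by computing both sides on the Fourier side with Plancherel (Proposition \ref{prop:plancherel}), and to check that each reduces to the same absolutely convergent integral
\[
\sqrt{2}\,\pi^{1/4}a^{1/2}e^{-t_{0}^{2}/2}\int_{\mathbb{R}}\bigl(\widehat{H_{f}^{a}}(\xi)-\widehat{f}(\xi)\bigr)e^{-2\pi^{2}a^{2}\xi^{2}}e^{2\pi ib\xi}\,d\xi .
\]

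\textbf{Left-hand side.} By \eqref{eq:wavelet-definition}, $\mathcal{W}_{\psi}f(b,a)=\langle f,T_{b}D_{a}\psi\rangle$. By \eqref{eq:operator-fourier}, $\widehat{T_{b}D_{a}\psi}(\xi)=e^{-2\pi ib\xi}a^{1/2}\widehat{\psi}(a\xi)$. Plancherel and the fact that $\widehat{\psi}$ is real give
\[
\mathcal{W}_{\psi}f(b,a)=a^{1/2}\int_{\mathbb{R}}\widehat{f}(\xi)\,\widehat{\psi}(a\xi)\,e^{2\pi ib\xi}\,d\xi .
\]
Inserting the factorization \eqref{eq:morlet-fourier} with $\xi$ replaced by $a\xi$ produces the factor $\bigl(e^{2\pi at_{0}\xi}-1\bigr)\widehat{f}(\xi)$. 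By \eqref{eq:H-definition} this equals $\widehat{H_{f}^{a}-f}(\xi)$, which yields the displayed integral. Absolute convergence holds because $\widehat{H_{f}^{a}-f}\in L^{2}$ by \eqref{eq:H-domain} and the Gaussian lies in $L^{2}$.

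\textbf{Right-hand side.} Put $h=H_{f}^{a}-f\in L^{2}$ and $s=s_{a}$. Both $D_{1/s}h$ and $\phi$ lie in $L^{2}$, and the product of their Fourier transforms lies in $L^{1}$. Hence the convolution $(D_{1/s}h)*\phi$ is the continuous function $\mathcal{F}^{-1}\bigl(\widehat{D_{1/s}h}\,\widehat{\phi}\bigr)$ everywhere, not merely almost everywhere. To see this, write the convolution as an inner product $\langle D_{1/s}h,\overline{T_{x}\phi(-\cdot)}\rangle$ and apply Plancherel pointwise in $x$.

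Using $\widehat{D_{1/s}h}(\xi)=s^{-1/2}\widehat{h}(\xi/s)$ and $\widehat{\phi}(\xi)=2^{1/4}e^{-\pi\xi^{2}}$, I evaluate at $x=b/s$ and substitute $\xi=s\eta$. This gives
\[
s^{1/2}2^{1/4}\int_{\mathbb{R}}\widehat{h}(\eta)\,e^{-\pi s^{2}\eta^{2}}\,e^{2\pi ib\eta}\,d\eta .
\]
The choice $s^{2}=2\pi a^{2}$ makes the Gaussian $e^{-2\pi^{2}a^{2}\eta^{2}}$. The constant becomes $s^{1/2}2^{1/4}=(2\pi)^{1/4}a^{1/2}2^{1/4}=\sqrt{2}\,\pi^{1/4}a^{1/2}$, which matches the left-hand side once multiplied by $e^{-t_{0}^{2}/2}$.

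\textbf{The bandlimited case.} If $f\in PW_{B}^{2}$, then $\widehat{f}$ vanishes outside $[-B,B]$. On that interval $e^{2\pi at_{0}\xi}\le e^{2\pi a|t_{0}|B}$, so \eqref{eq:H-domain} holds. Moreover $\widehat{H_{f}^{a}}$, and hence $\widehat{H_{f}^{a}-f}$, is supported in $[-B,B]$, which gives $H_{f}^{a}-f\in PW_{B}^{2}$.

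The main point needing care is the constant and scaling bookkeeping in matching the two Gaussians, and the justification that the convolution identity holds pointwise at every $b$ rather than almost everywhere. Continuity of both sides, as argued above, handles the latter.
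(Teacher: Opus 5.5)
Your proof is correct and follows essentially the same route as the paper: both sides are reduced on the Fourier side via Plancherel to the same integral, matched by the substitution $\eta=s_a\xi$ with $\pi s_a^2=2\pi^2a^2$, and the bandlimited case follows from boundedness of the multiplier on $[-B,B]$. Your justification that the convolution identity holds at every point (write the convolution as an inner product and apply Plancherel for each fixed $x$) is a welcome tightening of the paper's bare appeal to the convolution theorem and Fourier inversion.
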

\begin{proof}
	By \eqref{eq:H-domain}, the function in \eqref{eq:H-definition} belongs to \(L^{2}\), so \(H_{f}^{a}\) is well defined by Plancherel's theorem.
	Let
	\[
	\psi_{b,a}=T_{b}D_{a}\psi, \qquad \psi_{b,a}(x)=a^{-1/2}\psi\!\left(\frac{x-b}{a}\right).
	\]
	Equation~\eqref{eq:operator-fourier} gives
	\[
	\widehat{\psi_{b,a}}(\xi)=a^{1/2}e^{-2\pi ib\xi}\widehat{\psi}(a\xi).
	\]
	By applying Plancherel’s theorem to \eqref{eq:wavelet-definition}, and then substituting \eqref{eq:morlet-fourier} into \eqref{eq:wavelet-definition}, we obtain
	\begin{align}\label{eq:conjugated-wavelet-fourier}
		\mathcal{W}_{\psi} f(b,a)\nonumber&=\langle f,\psi_{b,a}\rangle=\langle \hat{f},\hat{\psi}_{b,a}\rangle=a^{1/2}\int_{\mathbb{R}}\widehat{f}(\xi) \overline{\widehat{\psi}(a\xi)}e^{2\pi ib\xi}\,d\xi\\\nonumber
		&=a^{\frac{1}{2}}\int_{\mathbb{R}}\widehat{f}(\xi)\sqrt{2}\pi^{\frac{1}{4}}\left( e^{-\frac{(2\pi a \xi-t_{0})^{2}}{2}}-e^{-2\pi^{2}(a\xi)^{2}}e^{-\frac{t_{0}^{2}}{2}}\right)e^{2\pi ib\xi}d\xi\\\nonumber
		&=\sqrt{2}\pi^{\frac{1}{4}}a^{\frac{1}{2}}e^{-\frac{t_{0}^{2}}{2}} \int_{\mathbb{R}}\widehat{f}(\xi)\left(e^{-2\pi^{2}a^{2}\xi^{2}}e^{2\pi a\xi t_{0}}-e^{-2\pi^{2}a^{2}\xi^{2}}\right)e^{2\pi ib\xi}d\xi\\\nonumber
		&=\sqrt{2}\pi^{\frac{1}{4}}a^{\frac{1}{2}}e^{-\frac{t_{0}^{2}}{2}}\int_{\mathbb{R}}(\widehat{f}(\xi)e^{2\pi a\xi t_{0}}-\widehat{f}(\xi))e^{2\pi ib\xi}e^{-2\pi^{2}a^{2}\xi^{2}}d\xi\\
		&=2^{\frac{1}{4}}s_{a}^{1/2}e^{-\frac{t_{0}^{2}}{2}}\int_{\mathbb{R}}(\widehat{H_{f}^{a}}(\xi)-\widehat{f}(\xi))e^{2\pi ib\xi}e^{-2\pi^{2}a^{2}\xi^{2}}d\xi.
	\end{align}
	The products in this proof are integrable by the Cauchy-Schwarz inequality. Obviously, we can get
	\begin{equation*}\label{eq:gaussian-scaling}
		e^{-2\pi^{2}a^{2}\xi^{2}}=e^{-\pi(\sqrt{2\pi}\,a\xi)^{2}}=e^{-\pi(s_{a}\xi)^{2}}=2^{-1/4}\phi(s_{a}\xi).
	\end{equation*}
	Thus \eqref{eq:conjugated-wavelet-fourier} becomes
	\begin{equation}\label{eq:morlet-h-integral}
		\mathcal{W}_{\psi} f(b,a)=e^{-t_{0}^{2}/2}s_{a}^{1/2} \int_{\mathbb{R}}(\widehat{H_{f}^{a}}(\xi)-\widehat{f}(\xi))\phi(s_{a}\xi) e^{2\pi ib\xi}\,d\xi.
	\end{equation}
	
	It remains to identify this integral as a convolution. By the convolution theorem and \eqref{eq:operator-fourier}, we can obtain 
	\begin{align*}
		\left[\bigl(D_{1/s_{a}}(H_{f}^{a}-f)\bigr)*\phi\right](b/s_{a})
		&=\int_{\mathbb{R}}\left[\bigl(D_{1/s_{a}}(H_{f}^{a}-f)\bigr)*\phi\right]^\wedge(\eta)e^{2\pi i(b/s_{a})\eta}\,d\eta\\
		&=\int_{\mathbb{R}}\bigl(D_{1/s_{a}}(H_{f}^{a}-f)\bigr)^\wedge(\eta)\phi(\eta) e^{2\pi i(b/s_{a})\eta}\,d\eta\\
		&=\int_{\mathbb{R}}s_{a}^{-1/2}\bigl(H_{f}^{a}-f\bigr)^\wedge(\eta/s_{a}) \phi(\eta)e^{2\pi i(b/s_{a})\eta}\,d\eta\\
		&\overset{\eta=s_{a}\xi}{=}s_{a}^{1/2}\int_{\mathbb{R}}(\widehat{H_{f}^{a}}(\xi)-\widehat{f}(\xi))\phi(s_{a}\xi) e^{2\pi ib\xi}\,d\xi.
	\end{align*}
	Comparison with \eqref{eq:morlet-h-integral} proves \eqref{eq:morlet-representation}.
	
	Finally, suppose \(f\in PW_{B}^{2}\).
	The multiplier \(e^{2\pi at_{0}\xi}\) is bounded on the compact interval \([-B,B]\).
	Hence \(e^{2\pi at_{0}\xi}\widehat{f}\in L^{2}\), so \eqref{eq:H-domain} holds.
	Moreover,
	$$(H_{f}^{a}-f)^\wedge(\xi)=\bigl(e^{2\pi at_{0}\xi}-1\bigr)\widehat{f}(\xi)$$
	and therefore \(H_{f}^{a}-f\in PW_{B}^{2}\).
\end{proof}

For \(a>0\) and \(w\in\mathbb{R}\), let
\begin{equation}\label{eq:augmented-parameters}
	s_{a}=\sqrt{2\pi}\,a,\qquad \delta_{a,w}=\frac{w}{s_{a}},\qquad c_{a,w}=e^{2\pi at_{0}\delta_{a,w}}.
\end{equation}
Define the augmented measurement operator
\begin{align}
	\mathcal{A}_{a,w}f(b) &:={\mathcal{W}}_{\psi}\bigl(c_{a,w}M_{-\delta_{a,w}}f\bigr)(b,a)\nonumber\\
	&\quad+e^{-t_{0}^{2}/2}\, \sigma_{s_{a}}\!\left(M_{-\delta_{a,w}} \bigl((c_{a,w}-1)f\bigr)\right)(b).\label{eq:augmented-operator}
\end{align}
This is an augmented Morlet--Gaussian measurement: its first summand is a Morlet transform of a known modulation and scalar weighting of \(f\), while its second summand is a Gaussian convolution term.\\
Define the fixed companion wavelet
\begin{equation}\label{eq:companion-window}
	h_{a,w} :=c_{a,w}M_{\delta_{a,w}}D_{a}\psi +e^{-t_{0}^{2}/2}(c_{a,w}-1) M_{\delta_{a,w}}D_{s_{a}}\phi.
\end{equation}
Straightforward calculation shows 
$$D_{a}\psi(x)=\bigl(e^{it_{0}x/a}-e^{-t_{0}^{2}/2}\bigr)D_{s_{a}}\phi(x).$$
Substitution into \eqref{eq:companion-window} yields
\[
h_{a,w}(x)=e^{2\pi i\delta_{a,w}x} \bigl(c_{a,w}e^{it_{0}x/a}-e^{-t_{0}^{2}/2}\bigr)D_{s_{a}}\phi(x).
\]

\begin{lemma}[Fixed companion-wavelet form]\label{lem:companion-window}
	For every \(a>0\) and \(w\in\mathbb{R}\), the function in \eqref{eq:companion-window} is a nonzero admissible wavelet. More precisely,
	\[
	h_{a,w}\in\mathcal{S}(\mathbb{R})\subset L^2(\mathbb{R}),\qquad \widehat{h_{a,w}}(0)=0,\qquad
	\int_{\mathbb{R}}\frac{|\widehat{h_{a,w}}(\xi)|^{2}}{|\xi|}\,d\xi<\infty.
	\]
	Moreover, for every \(f\in L^{2}(\mathbb{R})\) and \(b\in\mathbb{R}\),
	\begin{equation}\label{eq:companion-window-identity}
		\mathcal{A}_{a,w}f(b)=e^{-2\pi i\delta_{a,w}b}\langle f,T_{b}h_{a,w}\rangle=e^{-2\pi i\delta_{a,w}b}\mathcal{W}_{h_{a,w}} f(b,1).
	\end{equation}
	In particular,
	\begin{equation}\label{eq:companion-window-modulus}
		|\mathcal{A}_{a,w}f(b)|=|\langle f,T_{b}h_{a,w}\rangle|=|\mathcal{W}_{h_{a,w}} f(b,1)|.
	\end{equation}
\end{lemma}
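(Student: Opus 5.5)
The plan is to use the closed form of $h_{a,w}$ obtained just before the lemma, together with the Fourier rules \eqref{eq:operator-fourier}, and treat the three assertions separately: the Schwartz and admissibility properties, the vanishing of $\widehat{h_{a,w}}(0)$, and the measurement identity \eqref{eq:companion-window-identity}. Throughout I write $s=s_a$, $\delta=\delta_{a,w}$, $c=c_{a,w}$.

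\emph{Schwartz class.} By the displayed formula, $h_{a,w}(x)=e^{2\pi i\delta x}\bigl(c\,e^{it_0x/a}-e^{-t_0^2/2}\bigr)D_{s}\phi(x)$. This is a finite linear combination of modulated Gaussians. Each of these lies in $\mathcal{S}(\mathbb{R})$, so $h_{a,w}\in\mathcal{S}(\mathbb{R})\subset L^2(\mathbb{R})$.

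\emph{Fourier transform and vanishing at the origin.} Since $\widehat{D_s\phi}(\xi)=s^{1/2}\phi(s\xi)$ and $e^{it_0x/a}=e^{2\pi i (t_0/(2\pi a))x}$, I get
\[
\widehat{h_{a,w}}(\xi)=s^{1/2}\Bigl[c\,\phi\bigl(s(\xi-\delta-\tfrac{t_0}{2\pi a})\bigr)-e^{-t_0^2/2}\phi\bigl(s(\xi-\delta)\bigr)\Bigr].
\]
Now evaluate at $\xi=0$, using $s\delta=w$, $s\,t_0/(2\pi a)=t_0/\sqrt{2\pi}$ and $c=e^{\sqrt{2\pi}\,t_0w}$. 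Expanding the square gives
\[
\pi\bigl(w+t_0/\sqrt{2\pi}\bigr)^2=\pi w^2+\sqrt{2\pi}\,t_0w+t_0^2/2 .
\]
Hence $c\,\phi\bigl(w+t_0/\sqrt{2\pi}\bigr)=2^{1/4}e^{-\pi w^2}e^{-t_0^2/2}=e^{-t_0^2/2}\phi(w)$, and therefore $\widehat{h_{a,w}}(0)=0$. This is the step where the choice of $c_{a,w}$ in \eqref{eq:augmented-parameters} is exactly what is needed. It is the main computation to check carefully, since sign conventions in $\delta$ and $t_0$ must line up.

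\emph{Nonvanishing and admissibility.} The two Gaussians in $\widehat{h_{a,w}}$ have distinct centers $\delta+t_0/(2\pi a)$ and $\delta$, because $t_0\neq0$. Both coefficients are nonzero, so the two Gaussians are linearly independent and $h_{a,w}\neq0$. For the admissibility integral, $\widehat{h_{a,w}}$ is Schwartz and vanishes at $0$, so the mean value theorem gives $|\widehat{h_{a,w}}(\xi)|\le C|\xi|$ near $0$. Thus $|\widehat{h_{a,w}}(\xi)|^2/|\xi|\le C^2|\xi|$ is integrable on $[-1,1]$. On $|\xi|>1$ the integrand is bounded by $|\widehat{h_{a,w}}|^2\in L^1$.

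\emph{Identity \eqref{eq:companion-window-identity}.} I expand both summands of \eqref{eq:augmented-operator} as inner products, using that $c$ and $c-1$ are real.

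For the first summand, $\mathcal{W}_\psi(cM_{-\delta}f)(b,a)=c\langle M_{-\delta}f,T_bD_a\psi\rangle=c\langle f,M_\delta T_bD_a\psi\rangle$. The commutation relation $M_\delta T_b=e^{2\pi i\delta b}T_bM_\delta$ and conjugate-linearity in the second slot then give $e^{-2\pi i\delta b}\,c\,\langle f,T_bM_\delta D_a\psi\rangle$.

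For the second summand, $\phi$ is real and even, so $\sigma_s g(b)=\langle g,T_bD_s\phi\rangle$. The same manipulation gives $e^{-2\pi i\delta b}(c-1)\langle f,T_bM_\delta D_s\phi\rangle$.

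Adding the two terms and comparing with \eqref{eq:companion-window} yields $\mathcal{A}_{a,w}f(b)=e^{-2\pi i\delta b}\langle f,T_bh_{a,w}\rangle$. Definition \eqref{eq:wavelet-definition} with $a=1$ identifies $\langle f,T_bh_{a,w}\rangle$ with $\mathcal{W}_{h_{a,w}}f(b,1)$. Taking moduli gives \eqref{eq:companion-window-modulus}.
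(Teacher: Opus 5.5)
Your proposal is correct and follows the paper's proof in all essentials: Schwartz membership from the explicit formula, a direct evaluation of $\widehat{h_{a,w}}(0)$, the mean value bound near $0$ together with the $L^2$ bound away from $0$ for admissibility, and the commutation $M_{\delta}T_b=e^{2\pi i\delta b}T_bM_{\delta}$ with real $c_{a,w}$ for the identity. For $\widehat{h_{a,w}}(0)$ you work from the closed form as two modulated Gaussians with $\widehat{\phi}=\phi$, while the paper uses the factorized $\widehat{\psi}$ from \eqref{eq:morlet-fourier}; this is the same cancellation. You also prove $h_{a,w}\neq0$ (distinct Gaussian centers since $t_0\neq0$), which the paper asserts but never checks; only note that your line for the second summand drops the factor $e^{-t_0^2/2}$, which must be carried along when you add the two terms.
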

\begin{proof}
	It is easily known that, for every $q\in L^{2}(\mathbb{R}),a>0$,
	$$(q*D_{a}\phi)(b)=\langle q,T_{b}D_{a}\phi\rangle.$$
	According to \eqref{eq:augmented-operator}, by \(M_{\delta_{a,w}} T_{b}=e^{2\pi i\delta_{a,w} b}T_{b}M_{\delta_{a,w}}\), we can get
	\begin{align*}
		\mathcal{A}_{a,w}f(b)
		&={\mathcal{W}}_{\psi}\bigl(c_{a,w}M_{-\delta_{a,w}}f\bigr)(b,a)+e^{-t_{0}^{2}/2}\sigma_{s_{a}}\!\left(M_{-\delta_{a,w}} \bigl((c_{a,w}-1)f\bigr)\right)(b)\\
		&=\bigl((c_{a,w}M_{-\delta_{a,w}}f)\ast D_{a}\psi\bigr)(b)+e^{-t_{0}^{2}/2}\left[ \bigl(M_{-\delta_{a,w}}(c_{a,w}-1)f\bigr)\ast D_{s_{a}}\phi\right](b)\\
		&=c_{a,w}\langle M_{-\delta_{a,w}}f,T_{b} D_{a}\psi\rangle+e^{-t_{0}^{2}/2}(c_{a,w}-1)\langle M_{-\delta_{a,w}}f,T_{b} D_{s_{a}}\phi\rangle\\
		&=c_{a,w}\langle f,M_{\delta_{a,w}}T_{b} D_{a}\psi\rangle
		+e^{-t_{0}^{2}/2}(c_{a,w}-1)\langle f,M_{\delta_{a,w}}T_{b} D_{s_{a}}\phi\rangle\\
		&=e^{-2\pi i\delta_{a,w} b}\bigl(c_{a,w}\langle f,T_{b}M_{\delta_{a,w}} D_{a}\psi\rangle+e^{-t_{0}^{2}/2}(c_{a,w}-1)\langle f,T_{b}M_{\delta_{a,w}} D_{s_{a}}\phi\rangle\bigr)\\
		&=e^{-2\pi i\delta_{a,w} b}\bigl(\langle f,T_{b}\bigl(c_{a,w}M_{\delta_{a,w}} D_{a}\psi+e^{-t_{0}^{2}/2}(c_{a,w}-1) M_{\delta_{a,w}}D_{s_{a}}\phi\bigr)\rangle\bigr)\\
		&=e^{-2\pi i\delta_{a,w} b}\langle f,T_{b}h_{a,w}\rangle\\
		&=e^{-2\pi i\delta_{a,w}b}\mathcal{W}_{h_{a,w}} f(b,1),
	\end{align*}
	which proves \eqref{eq:companion-window-identity} and \eqref{eq:companion-window-modulus}.
	
	It remains to verify the wavelet properties. Since $\psi,\phi\in \mathcal{S}(\mathbb{R})$,
	the explicit formula \eqref{eq:companion-window} also gives \(h_{a,w}\in\mathcal{S}(\mathbb{R})\).
	
	According to \eqref{eq:operator-fourier} and \eqref{eq:companion-window}, we know that
	\begin{align*}
		\hat{h}_{a,w}(\xi)&=c_{a,w}(M_{\delta_{a,w}}D_{a}\psi)^\wedge(\xi)+e^{-t_{0}^{2}/2}(c_{a,w}-1) (M_{\delta_{a,w}}D_{s_{a}}\phi)^\wedge(\xi)\\
		&=c_{a,w}(D_{a}\psi)^\wedge(\xi-\delta_{a,w})+e^{-t_{0}^{2}/2}(c_{a,w}-1) (D_{s_{a}}\phi)^\wedge(\xi-\delta_{a,w})\\		&=c_{a,w}a^{1/2}\hat{\psi}(a\xi-a\delta_{a,w})+e^{-t_{0}^{2}/2}(c_{a,w}-1)s_a^{1/2} \hat{\phi}(s_a\xi-s_a\delta_{a,w}).
	\end{align*}
	Since \(\hat{\psi}, \hat{\phi} \in \mathcal{S}(\mathbb{R})\), \(\widehat{h_{a,w}}\in\mathcal{S}(\mathbb{R})\). When $\xi=0$, the above formulation can be written as
	\begin{align*}
		\hat{h}_{a,w}(0)
		&= c_{a,w}a^{1/2}\hat{\psi}(-a\delta_{a,w})
		+ e^{-t_{0}^{2}/2}(c_{a,w}-1)s_a^{1/2} \hat{\phi}(-s_a\delta_{a,w})\\
		&= c_{a,w}a^{1/2}\sqrt{2}\,\pi^{1/4}e^{-t_{0}^{2}/2}
		e^{-2\pi^{2}(-a\delta_{a,w})^{2}}
		\bigl(e^{2\pi t_{0}(-a\delta_{a,w})}-1\bigr)\\
		&\quad + e^{-t_{0}^{2}/2}(c_{a,w}-1)s_a^{1/2}2^{1/4}
		e^{-\pi (-s_a\delta_{a,w})^{2}}\\
		&=a^{1/2}\sqrt{2}\,\pi^{1/4}e^{-t_{0}^{2}/2}[e^{\sqrt{2\pi}t_{0}w}
		e^{-\pi w^{2}}
		\bigl(e^{-\sqrt{2\pi} t_{0}w}-1\bigr)+(e^{\sqrt{2\pi}t_{0}w}-1)e^{-\pi w^{2}}]\\
		&=a^{1/2}\sqrt{2}\,\pi^{1/4}e^{-t_{0}^{2}/2}[e^{-\pi w^{2}}-e^{\sqrt{2\pi} t_{0}w}e^{-\pi w^{2}}+e^{\sqrt{2\pi} t_{0}w}e^{-\pi w^{2}}-e^{-\pi w^{2}}]\\
		&=0.
	\end{align*}
	Finally, we prove $\int_{\mathbb{R}}\frac{|\widehat{h_{a,w}}(\xi)|^{2}}{|\xi|}\,d\xi<\infty$. It is easily to see that
	\begin{align*}
		\int_{\mathbb{R}}\frac{|\widehat{h_{a,w}}(\xi)|^{2}}{|\xi|}\,d\xi=\int_{|\xi|\leq1}\frac{|\widehat{h_{a,w}}(\xi)|^{2}}{|\xi|}\,d\xi+\int_{|\xi|>1}\frac{|\widehat{h_{a,w}}(\xi)|^{2}}{|\xi|}\,d\xi.
	\end{align*}
	On the one hand, for \(|\xi|\leq1\), since $\widehat{h_{a,w}}\in \mathcal{S}(\mathbb{R})$, Lagrange Mean Value Theorem on the interval between 0 and $\xi$ gives
	$$|\widehat{h_{a,w}}(\xi)|\leq|\xi|\ C_0,$$
	where $C_0:=\mathrm{sup}_{|u|\leq1}|(\widehat{h_{a,w}})'(u)|$. Therefore, we know that
	$$\int_{|\xi|\leq1}\frac{|\widehat{h_{a,w}}(\xi)|^{2}}{|\xi|}\,d\xi\leq\int_{|\xi|\leq1}\frac{|\xi|\ ^2 C_0^2}{|\xi|}\,d\xi=\int_{|\xi|\leq1}|\xi|\ C_0^2\,d\xi\leq C_0^2\int_{|\xi|\leq1}|\xi|\ d\xi<\infty.$$
	On the other hand, for \(|\xi|>1\), one has \(|\xi|^{-1}\leq1\).
	Therefore, we can get 
	$$\int_{|\xi|>1}\frac{|\widehat{h_{a,w}}(\xi)|^{2}}{|\xi|}\,d\xi\leq\int_{|\xi|>1}|\widehat{h_{a,w}}(\xi)|^{2}\,d\xi\leq\int_{\mathbb{R}}|\widehat{h_{a,w}}(\xi)|^{2}\,d\xi<\infty.$$
	These two estimates prove the stated admissibility integral.
\end{proof}

\begin{lemma}[The augmented Gaussian identity]\label{lem:augmented-identity}
	Let \(B>0\), \(a>0\), \(w\in\mathbb{R}\), and \(f\in PW_{B}^{2}\).
	With the notation in \eqref{eq:augmented-parameters},
	\begin{equation*}
		\mathcal{A}_{a,w}f(b)=e^{-t_{0}^{2}/2} \left[\left(M_{-w}D_{1/s_{a}}(H_{f}^{a}-f)\right)*\phi\right](b/s_{a})
	\end{equation*}
	for every \(b\in\mathbb{R}\).
\end{lemma}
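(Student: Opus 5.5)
The plan is to reduce $\mathcal{A}_{a,w}f$ to a single Gaussian convolution by applying Lemma \ref{lem:morlet-representation} to the modulated signal $q:=c_{a,w}M_{-\delta_{a,w}}f$. The Gaussian summand is then rewritten with Lemma \ref{lem:dilation-convolution}, and the two contributions are combined. Throughout, write $s=s_a$, $\delta=\delta_{a,w}$, $c=c_{a,w}$.

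\emph{Step 1 (the Morlet summand).} Since $\widehat{q}(\xi)=c\,\widehat f(\xi+\delta)$ is supported in the compact interval $[-B-\delta,B-\delta]$, the multiplier $e^{2\pi a t_0\xi}$ is bounded there. Hence hypothesis \eqref{eq:H-domain} holds for $q$, by the same argument as at the end of the proof of Lemma \ref{lem:morlet-representation}, even though this support is not centered. The key computation is
\[
\widehat{H_q^a}(\xi)=e^{2\pi a t_0\xi}\,c\,\widehat f(\xi+\delta)=e^{2\pi a t_0(\xi+\delta)}\widehat f(\xi+\delta)=\bigl(M_{-\delta}H_f^a\bigr)^\wedge(\xi).
\]
It uses $c=e^{2\pi a t_0\delta}$ from \eqref{eq:augmented-parameters}. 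Thus $H_q^a-q=M_{-\delta}H_f^a-c\,M_{-\delta}f$, and \eqref{eq:morlet-representation} gives
\[
\mathcal{W}_\psi q(b,a)=e^{-t_0^2/2}\Bigl[\bigl(D_{1/s}(M_{-\delta}H_f^a-cM_{-\delta}f)\bigr)*\phi\Bigr](b/s).
\]

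\emph{Step 2 (the Gaussian summand).} By \eqref{eq:sigma}, $\sigma_s\bigl(M_{-\delta}((c-1)f)\bigr)(b)=\bigl((c-1)M_{-\delta}f*D_s\phi\bigr)(b)$. By Lemma \ref{lem:dilation-convolution} this equals $\bigl[(D_{1/s}(c-1)M_{-\delta}f)*\phi\bigr](b/s)$.

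\emph{Step 3 (combination).} Adding the two summands and using linearity of $D_{1/s}$ and of convolution, the terms $-cM_{-\delta}f$ and $(c-1)M_{-\delta}f$ combine to $-M_{-\delta}f$. This yields
\[
\mathcal{A}_{a,w}f(b)=e^{-t_0^2/2}\bigl[(D_{1/s}M_{-\delta}(H_f^a-f))*\phi\bigr](b/s).
\]
It then remains to commute the operators. Lemma \ref{jiaohuan} with dilation parameter $1/s$ reads $M_{v}D_{1/s}=D_{1/s}M_{v/s}$. Choosing $v=-\delta s=-w$ gives $D_{1/s}M_{-\delta}=M_{-w}D_{1/s}$, which is exactly the asserted identity.

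The only delicate point is Step 1: Lemma \ref{lem:morlet-representation} is phrased with the centered class $PW_B^2$ in its final clause, so one must check its hypothesis directly for the shifted signal $q$, and verify that the factor $c_{a,w}$ is precisely absorbed into $H_f^a$. The remaining steps are bookkeeping with the operator identities already established.
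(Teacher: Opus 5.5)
Your proof is correct and follows the paper's argument. You compute $\widehat{H^{a}_{q}}=(M_{-\delta_{a,w}}H^{a}_{f})^\wedge$ using $c_{a,w}=e^{2\pi a t_0\delta_{a,w}}$, apply the Morlet representation to $q$, rewrite the Gaussian summand via Lemma \ref{lem:dilation-convolution}, let the $(c_{a,w}-1)$ terms cancel, and commute $D_{1/s_a}M_{-\delta_{a,w}}=M_{-w}D_{1/s_a}$ via Lemma \ref{jiaohuan}. Your explicit check that \eqref{eq:H-domain} holds for $q$, whose Fourier support is the shifted interval $[-B-\delta_{a,w},B-\delta_{a,w}]$, is a step the paper uses without comment.
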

\begin{proof}
	The modulation formula in \eqref{eq:operator-fourier} gives
	\begin{align}
		(c_{a,w}M_{-\delta_{a,w}}f)^\wedge(\xi) &= c_{a,w}\widehat{f}(\xi+\delta_{a,w}), \label{eq:v-q-fourier}\\
		(M_{-\delta_{a,w}}\bigl((c_{a,w}-1)f\bigr))^\wedge(\xi) &= (c_{a,w}-1)\widehat{f}(\xi+\delta_{a,w}). \label{eq:v-q-fourier1}
	\end{align}
	
	We now compute the action of \(H^{a}\) on the modulated signal. According to (\ref{eq:H-definition}), we know that
	\begin{align}
		(H_{c_{a,w}M_{-\delta_{a,w}}f}^{a})^\wedge(\xi) &=e^{2\pi at_{0}\xi}[c_{a,w}M_{-\delta_{a,w}}f]^\wedge(\xi)\nonumber\\
		&=e^{2\pi at_{0}\xi}c_{a,w}\widehat{f}(\xi+\delta_{a,w})\nonumber\\
		&=e^{2\pi at_{0}(\xi+\delta_{a,w})}\widehat{f}(\xi+\delta_{a,w})\nonumber\\
		&=(M_{-\delta_{a,w}}H_{f}^{a})^\wedge(\xi).\label{eq:H-modulated}
	\end{align}
	Therefore, subtracting \eqref{eq:v-q-fourier} from \eqref{eq:H-modulated} gives    
	\begin{eqnarray}
		&&\nonumber(H_{c_{a,w}M_{-\delta_{a,w}}f}^{a}-c_{a,w}M_{-\delta_{a,w}}f)^\wedge(\xi)\\
		&=&e^{2\pi at_{0}\xi}c_{a,w}\widehat{f}(\xi+\delta_{a,w})-c_{a,w}\widehat{f}(\xi+\delta_{a,w})\nonumber\\
		&=&\bigl(e^{2\pi at_{0}\xi}-1\bigr)c_{a,w}\widehat{f}(\xi+\delta_{a,w})\nonumber\\
		&=&\bigl(e^{2\pi at_{0}(\xi+\delta_{a,w})}-1\bigr)\widehat{f}(\xi+\delta_{a,w}) -(c_{a,w}-1)\widehat{f}(\xi+\delta_{a,w})\nonumber\\
		&=&[M_{-\delta_{a,w}}(H_{f}^{a}-f)]^\wedge(\xi)-(c_{a,w}-1)\widehat{f}(\xi+\delta_{a,w}).\nonumber\label{eq:augmented-fourier-calculation}
	\end{eqnarray}
	By Plancherel's theorem and (\ref*{eq:v-q-fourier1}), we can get
	\begin{equation}\label{eq:augmented-time-decomposition}
		H_{c_{a,w}M_{-\delta_{a,w}}f}^{a}-c_{a,w}M_{-\delta_{a,w}}f=M_{-\delta_{a,w}}(H_{f}^{a}-f)-M_{-\delta_{a,w}}\bigl((c_{a,w}-1)f\bigr).
	\end{equation}
	Using \eqref{eq:morlet-representation} and \eqref{eq:augmented-time-decomposition}, the Morlet summand in \eqref{eq:augmented-operator} is
	\begin{eqnarray}
		&&\nonumber\mathcal{W}_{\psi}(c_{a,w}M_{-\delta_{a,w}}f)(b,a)\\ 
		&=&e^{-t_{0}^{2}/2} \left[\bigl(D_{1/s_{a}}(H_{c_{a,w}M_{-\delta_{a,w}}f}^{a}-c_{a,w}M_{-\delta_{a,w}}f)\bigr)*\phi\right](b/s_{a})\nonumber\\
		&=&e^{-t_{0}^{2}/2} \left[\bigl(D_{1/s_a}\bigl(M_{-\delta_{a,w}}(H_{f}^{a}-f)-M_{-\delta_{a,w}}\bigl((c_{a,w}-1)f\bigr)\bigr)\bigr)*\phi\right](b/s_a)\nonumber\\
		&=&e^{-t_{0}^{2}/2} \left[\bigl(D_{1/s_a}(M_{-\delta_{a,w}}(H_{f}^{a}-f)\bigr)*\phi\right](b/s_a)\nonumber\\
		&&-e^{-t_{0}^{2}/2} \left[\bigl(D_{1/s_a}(M_{-\delta_{a,w}}\bigl((c_{a,w}-1)f\bigr))\bigr)*\phi\right](b/s_a).\label{eq:morlet-split-before-correction}
	\end{eqnarray}
	On the other hand, the definition of \(\sigma_{s}\) and Lemma \ref{lem:dilation-convolution} give
	\begin{align}\label{eq:sigma-correction}
		\sigma_{s_a}(M_{-\delta_{a,w}}\bigl((c_{a,w}-1)f\bigr))(b)
		&=[(M_{-\delta_{a,w}}\bigl((c_{a,w}-1)f\bigr))*D_{s_a}\phi](b)\nonumber\\
		&=\left[\bigl(D_{1/s_a}M_{-\delta_{a,w}}\bigl((c_{a,w}-1)f\bigr)\bigr)*\phi\right](b/s_a).
	\end{align}
	Consequently, according to the definition of $\mathcal{A}_{a,w}f(b)$ and Lemma \ref{jiaohuan}, by combining (\ref{eq:morlet-split-before-correction}) and (\ref{eq:sigma-correction}), we can get
	\begin{align*}
		\mathcal{A}_{a,w}f(b)
		&= \mathcal{W}_{\psi}\bigl(c_{a,w}M_{-\delta_{a,w}}f\bigr)(b,a)
		+ e^{-t_{0}^{2}/2}\sigma_{s_{a}}\!\left(M_{-\delta_{a,w}}
		\bigl((c_{a,w}-1)f\bigr)\right)(b)\\
		&= e^{-t_{0}^{2}/2}
		\left[\bigl(D_{1/s_a}(M_{-\delta_{a,w}}(H_{f}^{a}-f))\bigr)*\phi\right](b/s_a)\\
		&\quad - e^{-t_{0}^{2}/2}
		\left[\bigl(D_{1/s_a}(M_{-\delta_{a,w}}\bigl((c_{a,w}-1)f\bigr))\bigr)*\phi\right](b/s_a)\\
		&\quad + e^{-t_{0}^{2}/2}
		\left[\bigl(D_{1/s_a}(M_{-\delta_{a,w}}\bigl((c_{a,w}-1)f\bigr))\bigr)*\phi\right](b/s_a)\\
		&= e^{-t_{0}^{2}/2}
		\left[\bigl(D_{1/s_a}(M_{-\delta_{a,w}}(H_{f}^{a}-f))\bigr)*\phi\right](b/s_a)\\
		&= e^{-t_{0}^{2}/2}
		\left[\left(M_{-w}D_{1/s_{a}}(H_{f}^{a}-f)\right)*\phi\right](b/s_{a}).
	\end{align*}
\end{proof}

The following theorem shows that the general bandlimited signals can be reconstructed from the wavelet transform magnitude.
\begin{theorem}\label{thm:main}
	Let \(B>0\),\(a>0\),\(t_{0}\in\mathbb{R}\setminus\{0\}\) and \(w\in\mathbb{R}\setminus\{0\}\). 
	For \(f,g\in PW_{B}^{2}\), the following are equivalent:\\
	(i)\(f\sim g\);\\
	(ii)$$
	|\mathcal{W}_{\psi} f(b,a)|=|\mathcal{W}_{\psi}g(b,a)|,$$
	$$|\mathcal{W}_{h_{a,w}} f(b,1)|=|\mathcal{W}_{h_{a,w}} g(b,1)|,$$
	where \(b\in\frac{1}{4B}\mathbb{Z}\).
\end{theorem}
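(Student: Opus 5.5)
The implication (i)$\Rightarrow$(ii) is immediate. If $f=e^{i\alpha}g$, then by linearity $\mathcal{W}_{\psi}f(b,a)=e^{i\alpha}\mathcal{W}_{\psi}g(b,a)$ and $\mathcal{W}_{h_{a,w}}f(b,1)=e^{i\alpha}\mathcal{W}_{h_{a,w}}g(b,1)$ for every $b$. Taking moduli gives (ii).

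For (ii)$\Rightarrow$(i), I will rewrite both families of measurements as Gaussian convolutions of a single auxiliary bandlimited function in two frequency bins, and then apply Proposition \ref{prop:gaussian-uniqueness}.

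\textbf{Step 1: the auxiliary functions.} Set $u=H_f^a-f$ and $v=H_g^a-g$. By Lemma \ref{lem:morlet-representation}, both lie in $PW_B^2$. Put
\[
U=D_{1/s_a}u,\qquad V=D_{1/s_a}v,\qquad s_a=\sqrt{2\pi}\,a.
\]
By the bandwidth rule \eqref{eq:bandwidth-rule}, $U,V\in PW^2_{s_aB}$.

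\textbf{Step 2: the Morlet family.} Lemma \ref{lem:morlet-representation} gives
\[
|\mathcal{W}_\psi f(b,a)|=e^{-t_0^2/2}\,|U*\phi|(b/s_a),
\]
and the same with $g$ and $V$.

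\textbf{Step 3: the companion family.} By \eqref{eq:companion-window-modulus} and Lemma \ref{lem:augmented-identity},
\[
|\mathcal{W}_{h_{a,w}}f(b,1)|=|\mathcal{A}_{a,w}f(b)|=e^{-t_0^2/2}\,\bigl|(M_{-w}U)*\phi\bigr|(b/s_a),
\]
and likewise for $g$ and $V$.

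\textbf{Step 4: matching the lattice.} As $b$ ranges over $\frac{1}{4B}\mathbb{Z}$, the point $x=b/s_a$ ranges over $\frac{1}{4s_aB}\mathbb{Z}$. This is exactly the lattice that Proposition \ref{prop:gaussian-uniqueness} requires for the bandwidth $s_aB$. Cancelling the factor $e^{-t_0^2/2}>0$ and using commutativity of convolution, hypothesis (ii) becomes
\[
|\phi*(M_{-w_j}U)|(x)=|\phi*(M_{-w_j}V)|(x),\qquad x\in\tfrac{1}{4s_aB}\mathbb{Z},\quad j=0,1,
\]
with $w_0=0$ and $w_1=w$. These bins are distinct precisely because $w\neq 0$.

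\textbf{Step 5: applying the uniqueness result.} Proposition \ref{prop:gaussian-uniqueness} now gives $U\sim V$. Undoing the dilation with $D_{s_a}$ gives $u=e^{i\alpha}v$ for some $\alpha\in\mathbb{R}$. On the Fourier side this reads
\[
\bigl(e^{2\pi a t_0\xi}-1\bigr)\widehat f(\xi)=e^{i\alpha}\bigl(e^{2\pi a t_0\xi}-1\bigr)\widehat g(\xi)\quad\text{a.e.}
\]
Since $t_0\neq0$ and $a>0$, the multiplier vanishes only at $\xi=0$, which is a null set. Hence $\widehat f=e^{i\alpha}\widehat g$ almost everywhere, and Plancherel's theorem gives $f\sim g$.

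The argument is mostly bookkeeping. The main point needing care is Step 4: one must check that the dilation by $s_a$ enlarges the bandwidth to $s_aB$ and simultaneously rescales the sampling lattice $\frac{1}{4B}\mathbb{Z}$ to $\frac{1}{4s_aB}\mathbb{Z}$. Proposition \ref{prop:gaussian-uniqueness} then applies without any oversampling loss. The same lattice bookkeeping already appears in Theorem \ref{thm:scaled-gaussian}.
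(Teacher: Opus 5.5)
Your proposal is correct and follows essentially the same route as the paper. Both arguments use Lemma \ref{lem:morlet-representation}, \eqref{eq:companion-window-modulus} and Lemma \ref{lem:augmented-identity} to rewrite the two data families as Gaussian convolutions of $D_{1/s_a}(H_f^a-f)\in PW^2_{s_aB}$ in the bins $0$ and $w$, rescale the lattice to $\frac{1}{4s_aB}\mathbb{Z}$, apply Proposition \ref{prop:gaussian-uniqueness}, and cancel the a.e.\ nonvanishing multiplier $e^{2\pi a t_0\xi}-1$; your explicit use of \eqref{eq:companion-window-modulus} to pass from $|\mathcal{W}_{h_{a,w}}f(b,1)|$ to $|\mathcal{A}_{a,w}f(b)|$ just makes a step visible that the paper leaves implicit.
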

\begin{proof}
	i)$\Longrightarrow$ ii) Obviously, if $f\sim g$, ii) holds.\\
	ii)$\Longrightarrow$ i) Now we suppose that
	$$|\mathcal{W}_{\psi}f(b,a)|=|\mathcal{W}_{\psi}g(b,a)|.$$ 
	Lemma \ref{lem:morlet-representation} gives
	\begin{align*}
		|\mathcal{W}_{\psi}f(b,a)| &=|e^{-t_{0}^{2}/2}(D_{1/s_{a}}(H_{f}^{a}-f)*\phi)(b/s_{a})|.
	\end{align*}
	Therefore, we have
	\begin{align}
		\left|(D_{1/s_{a}}(H_{f}^{a}-f)*\phi)\left(\frac{b}{s_a}\right)\right|=\left|(D_{1/s_{a}}(H_{g}^{a}-g)*\phi)\left(\frac{b}{s_a}\right)\right|.\label{eq:first-reduction}
	\end{align}
	Next, we consider
	$$|\mathcal{W}_{h_{a,w}} f(b,1)|=|\mathcal{W}_{h_{a,w}} g(b,1)|.$$
	Lemma \ref{lem:augmented-identity} gives
	\begin{align*}
		|\mathcal{W}_{h_{a,w}}f(b,1)| =|e^{-t_{0}^{2}/2}\bigl(M_{-w}D_{1/s_{a}}(H_{f}^{a}-f)*\phi\bigr)(b/s_{a})|.
	\end{align*}
	Therefore, we have
	\begin{align}
		\left|\bigl(M_{-w}D_{1/s_{a}}(H_{f}^{a}-f)*\phi\bigr)\left(\frac{b}{s_a}\right)\right|=\left|\bigl(M_{-w}D_{1/s_{a}}(H_{g}^{a}-g)*\phi\bigr)\left(\frac{b}{s_a}\right)\right|.\label{eq:second-reduction}
	\end{align}
	Meanwhile, Lemma \ref{lem:morlet-representation} gives \(H_{f}^{a}-f,H_{g}^{a}-g \in PW_{B}^{2}\). Therefore, by \eqref{eq:bandwidth-rule}, it is easily know that
	\begin{equation*}
		D_{1/s_{a}}(H_{f}^{a}-f),D_{1/s_{a}}(H_{g}^{a}-g)\in PW_{s_{a}B}^{2}.
	\end{equation*}
	Since $b\in\frac{1}{4B}\mathbb{Z},$ we can get $\frac{b}{s_a}\in\frac{1}{4s_aB}\mathbb{Z}.$ 
	Applying Proposition \ref{prop:gaussian-uniqueness} to $D_{1/s_{a}}(H_{f}^{a}-f),D_{1/s_{a}}(H_{g}^{a}-g)$ 
	via \eqref{eq:first-reduction} and \eqref{eq:second-reduction}, with $w_0=0$ and $w_1=w$, we obtain
	$$D_{1/s_{a}}(H_{f}^{a}-f)=e^{i\alpha}D_{1/s_{a}}(H_{g}^{a}-g)$$
	for some \(\alpha\in\mathbb{R}\). Since dilation is invertible, \(H_{f}^{a}-f=e^{i\alpha}(H_{g}^{a}-g)\).  Taking Fourier transforms gives, almost everywhere,
	\begin{eqnarray}\label{eq:final-multiplier}
		\bigl(e^{2\pi at_{0}\xi}-1\bigr)\widehat{f}(\xi)&=&e^{i\alpha}\bigl(e^{2\pi at_{0}\xi}-1\bigr)\widehat{g}(\xi).
	\end{eqnarray}
	Because \(a>0\) and \(t_{0}\neq0\), the multiplier in \eqref{eq:final-multiplier} vanishes only at \(\xi=0\) and is therefore nonzero almost everywhere.Therefore,
	\begin{eqnarray}
		\widehat{f}(\xi)&=&e^{i\alpha}\widehat{g}(\xi).\nonumber
	\end{eqnarray} Plancherel's theorem then gives \(f=e^{i\alpha}g\) in \(L^{2}(\mathbb{R})\).
\end{proof}

\begin{remark}
	The two data families
	$$
	|\mathcal{W}_{\psi} f(b,a)|=|\mathcal{W}_{\psi}g(b,a)|,$$
	$$|\mathcal{W}_{h_{a,w}} f(b,1)|=|\mathcal{W}_{h_{a,w}} g(b,1)|$$
	in Theorem \ref{thm:main} reduce to two Gaussian convolution measurements of auxiliary bandlimited signals $D_{1/s_{a}}(H_{f}^{a}-f),D_{1/s_{a}}(H_{g}^{a}-g)$, with modulation parameters \(0\) and \(w\). This is exactly the setting of Proposition \ref{prop:gaussian-uniqueness}.
\end{remark}
\begin{remark}	
	To compare our main result with proposition \ref{16} and \ref{18}, we use figures 1,2,3 to illustrate that the signals are uniquely determined (up to a global phase) by fewer sampling points.
	
	As depicted in the figures, Figure \ref{fig:Figure 1} illustrates the sampling points of our main result, while Figure \ref{fig:Figure 3} depicts the sampling points of Proposition \ref{16}, and Figure \ref{fig:Figure 4} displays the sampling points of Proposition \ref{18}. Obviously, we can recover a signal (up to a global phase) using fewer sampling points now. Our sampling frequency has been greatly reduced when compared to \cite{ref1}.
	\begin{figure}[htbp]
		\centering
		\includegraphics[width=0.8\textwidth]{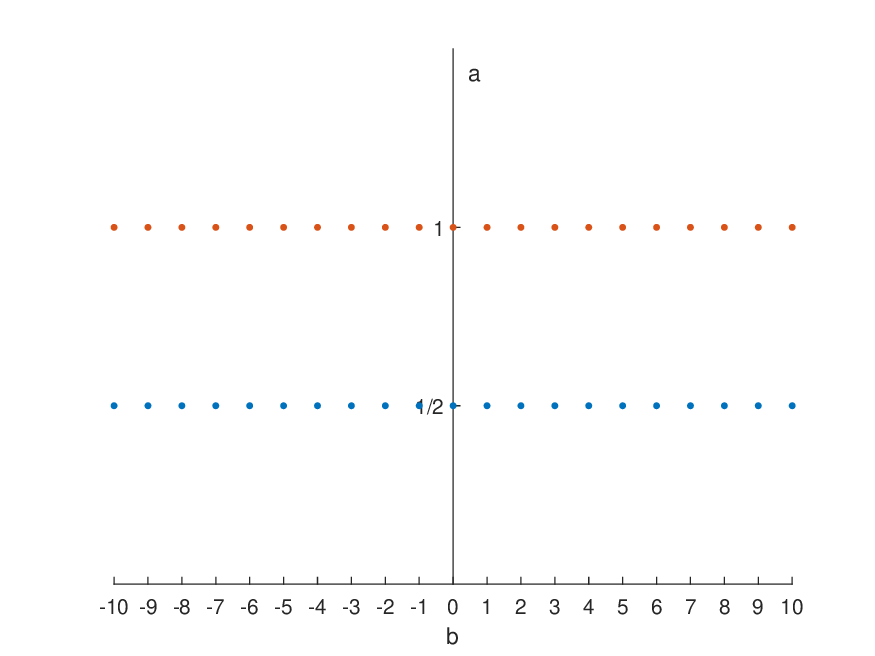}
		\caption{The sampling points of our main result as \(a=\frac{1}{2},B=\frac{1}{4}\).}
		\label{fig:Figure 1}
	\end{figure}
	
	\begin{figure}[htbp]
		\centering
		\includegraphics[width=0.8\textwidth]{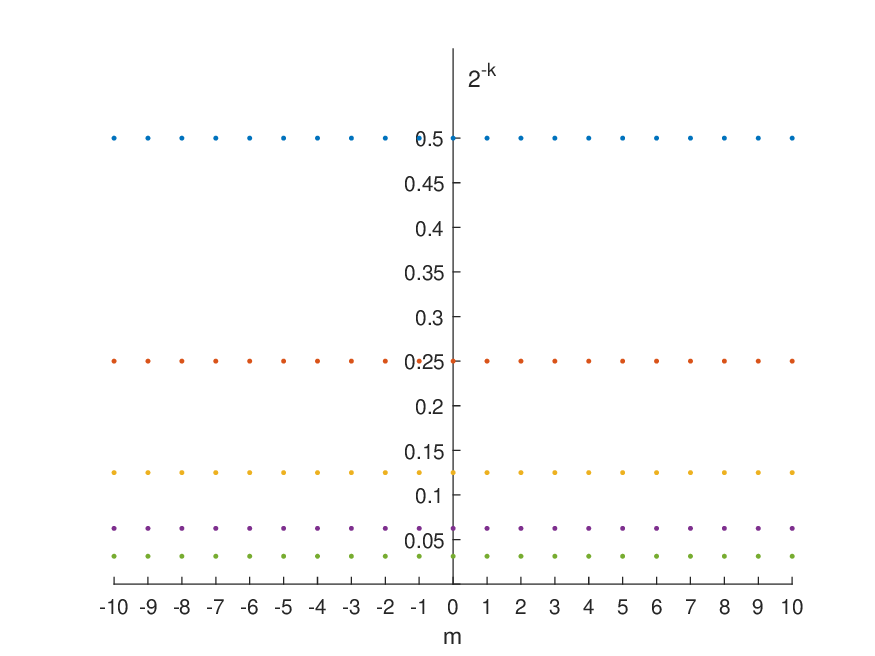}
		\caption{The sampling points of Proposition \ref{16} as \(\{a_{k}\}=2^{-k},B=\frac{1}{4}\), where \(k\in \mathbb{N}\).}
		\label{fig:Figure 3}
	\end{figure}
	
	\begin{figure}[htbp]
		\centering
		\includegraphics[width=0.8\textwidth]{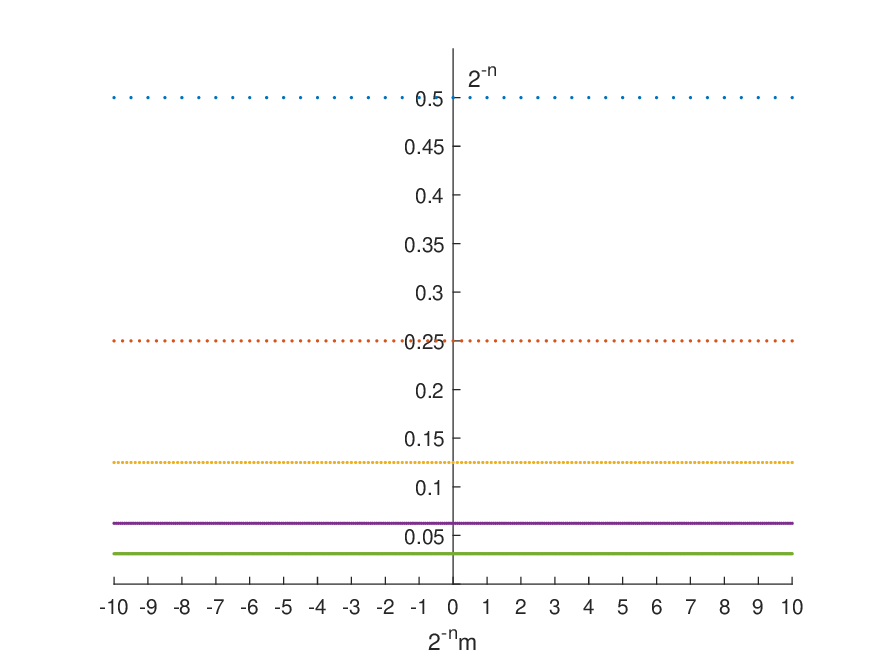}
		\caption{The sampling points of Proposition \ref{18} as \(a=2,b=1\).}
		\label{fig:Figure 4}
	\end{figure}
		\FloatBarrier
\end{remark}

\section{Conclusion}

In this paper, we have established that complex bandlimited signals can be uniquely recovered up to a global phase from two families of coefficient magnitudes, sampled at twice the Nyquist rate in two frequency bins. These two families are associated with translations of the fixed Morlet wavelet and an explicitly constructed fixed companion wavelet, the latter admitting an augmented Morlet--Gaussian representation. The idea of our proof rewrites the two families as two modulated Gaussian convolution measurements of an auxiliary bandlimited signal. By applying a Gaussian convolution uniqueness result, we prove that samples on \((4B)^{-1}\mathbb{Z}\) determine every signal in \(PW_B^2\) up to a global phase. Compared with earlier wavelet phase retrieval results that require wavelets with finitely vanishing moments, our approach achieves uniqueness with fewer sampling points. Future work will address stability and the question of whether Morlet wavelet transform magnitudes alone can achieve phase retrieval.

\section*{Acknowledgments}
The authors would like to thank an anonymous reviewer for their insightful remarks on an earlier draft, which significantly contributed to the improvement of this manuscript.

\section*{Declarations}
\textbf{Generative AI and AI-assisted technologies in the manuscript preparation process}
During the preparation of this work, the authors used ChatGPT for language editing. It was also used to assist in deriving that the companion wavelet in Lemma \ref{lem:companion-window} satisfies the wavelet admissibility condition. After using this tool, the authors reviewed and edited the content as needed.\\
\textbf{Funding} 
This work was supported partially by the National Natural Science Foundation of China (12471131 and 11971348)
and Tianjin Natural Science Foundation Project (24JCYBJC00480).\\
\textbf{Conflict of interest}
The authors declare that they have no Conflict of interest.\\
\textbf{Consent for Publication}
Not applicable.\\
\textbf{Data availability}
No data was used for the research described in the article.

\end{document}